\numberwithin{equation}{section}
\newtheorem{theorem}{Theorem}
\newtheorem{proposition}{Proposition}[section]
\newtheorem{lemma}[proposition]{Lemma}
\theoremstyle{definition}
\newtheorem{definition}{Definition}
\newcommand{\bs}[1]{{\boldsymbol{#1}}}
\newcommand{\rl}{\mathbb{R}}
\newcommand{\sx}{\mathbb{S}}
\newcommand{\abs}[1]{\left\vert#1\right\vert}
\newcommand{\ba}{\mathbf{A}}
\newcommand{\bb}{\mathbf{B}}
\title{Curves of constant curvature and torsion in the 3-sphere}
\subjclass[2010]{53A35}
\keywords{Curves in the three-sphere, Frenet-Serret equations, Constant curvature and torsion, Geodesic curvature, Helices}
\author{Debraj Chakrabarti}
\address{Department of Mathematics, Central Michigan University, Mt. Pleasant,  MI 48859, USA}
\email{chakr2d@cmich.edu}
\urladdr{http://people.cst.cmich.edu/chakr2d/}
\author{Rahul Sahay}
\address{Central Michigan University, Mt. Pleasant,  MI 48859,  USA}
\email{sahay1r@cmich.edu}
\author{Jared Williams}
\address{Central Michigan University, Mt. Pleasant,  MI 48859,  USA}
\email{willi9jr@cmich.edu}
\thanks{\noindent All three authors were partially supported by a grant from the NSF (\#1600371). Debraj Chakrabarti was partially supported by a grant from the Simons Foundation (\# 316632) and also by an 
Early Career internal grant from Central Michigan University.}
\begin{document}
\maketitle
\begin{abstract}
    We describe the curves of constant (geodesic) curvature and torsion in the three-dimensional round sphere. These curves 
    are the trajectory of a point whose motion is the superposition of two circular motions in orthogonal planes. The global 
    behavior may be periodic or the curve may be dense in a Clifford torus embedded in the three-sphere. This behavior is 
    very different from that of helices in three-dimensional Euclidean space, which also have constant curvature and torsion.
\end{abstract}
\section{Introduction} \label{sec-introduction}
Let $\left(M,\langle,\rangle\right)$ be a three dimensional Riemannian manifold, let $I\subseteq\rl$ be an open interval, and let $\bs{\gamma}:I\to M$ be a smooth curve in $M$, which we assume to be parametrized by the arc length $t$. It is well-known that the  local geometry of $\bs{\gamma}$ is characterized by the {\em curvature} $\kappa$ and the 
{\em torsion} $\tau$. These are functions defined 
along $\bs{\gamma}$ and are the coefficients of the well-known {\em Frenet-Serret formulas} \cite[Vol. IV, pp. 21-23]{spivak}:
\begin{alignat}{4} 
    \frac{D}{dt}{\mathbf{T}(t)} &= &&\phantom{-}\kappa(t) \mathbf{N}(t)&&&                                            \nonumber\\
    \frac{D}{dt}{\mathbf{N}(t)} &= -\kappa(t){\mathbf{T}}(t) && &&& +\tau(t) \mathbf{B}(t)\label{eq-frenet}\\
    \frac{D}{dt}{\mathbf{B}(t)} &= &&-\tau(t) {\mathbf{N}}(t),&&& \nonumber
\end{alignat}
where the orthogonal unit vector fields $\mathbf{T,N,B}$, with $\mathbf{T} = \bs{\gamma}'$, along the unit-speed curve $\bs{\gamma}$, constitute its {\em Frenet frame} and $\frac{D}{dt}$ denotes covariant differentiation along $\bs{\gamma}$ with respect to the arc length $t$. We will assume that each of the functions $\kappa$ and $\tau$ is either nowhere zero or vanishes identically. Additionally, if $\kappa$ is identically zero, then $\tau$ is also taken to be identically zero. For completeness,
we include a proof of the set of Equations given in~\eqref{eq-frenet} in Section~\ref{sec-3Dmanifold} below. We make the following definition:

\begin{definition} \label{helixDef}
Let $M$ be a Riemannian manifold of dimension 3. A curve\\$\bs{\gamma}:I\to M$, where $I\subseteq\rl$ is an open interval, will be called a {\em helix} (plural: {\em helices}) if its curvature $\kappa$ and torsion $\tau$ are non-negative constants. A helix is {\em non-degenerate} if $\kappa$ and $\tau$ are both positive, and {\em degenerate} otherwise. We say that the helix $\bs{\gamma}$ is {\em periodic} if there is a $T>0$ such that $\bs{\gamma}(t+T)= \bs{\gamma}(t)$ for each $t \in I$.
\end{definition}

We take $\tau$ to be non-negative since we use the non-oriented form of the Frenet-Serret Equations (see Section~\ref{sec-frenet}). Definition~\ref{helixDef} is motivated by the example of the Euclidean space $\rl^3$, where non-degenerate helices are curves of the form:
\begin{align}
 \bs{\gamma}(t)=\cos(\omega t)\mathbf{A}+\sin(\omega t)\mathbf{B} +t\mathbf{C}+\mathbf{D}\label{eq-euclidean}
 \end{align}
where $\mathbf{A},\mathbf{B},\mathbf{C}, \mathbf{D}\in \rl^3$, $\mathbf{A},\mathbf{B},\mathbf{C}$ are non-zero and orthogonal  with $\abs{\mathbf{A}}=\abs{\mathbf{B}}$, and $\omega>0$.  These are elegant curves that are invariant under a one-parameter group of isometries of the ambient space. Note that there are no non-degenerate periodic helices in $\rl^3$.

The aim of this paper is to study helices in the three dimensional round sphere $\sx^3$. Thanks to the fact that $\sx^3$ is compact, we expect that a non-degenerate helix in $\sx^3$ should ``come back where it started from'' provided we wait long enough, and therefore, there is a possibility that, for favorable choice of  the curvature and torsion, the helix is actually periodic, though locally it is not much different from a helix in $\rl^3$. Globally, a non-degenerate helix in $\sx^3$ has {\em two} fundamental angular frequencies, $\omega_1$ and $\omega_2$, as opposed to the single fundamental angular frequency, $\omega$, of the helix given by Equation~\eqref{eq-euclidean} in $\rl^3$. A non-degenerate helix in $\sx^3$ may be thought of as the trajectory of a particle which performs two superimposed circular motions with frequencies $\omega_1$ and $\omega_2$. These fundamental angular frequencies must satisfy
    \[ \omega_2 > 1 > \omega_1, \]
a constraint which arises because a curve with non-zero curvature and torsion must lie in the positively curved compact space $\sx^3$. Unlike in $\rl^3$ where non-degenerate helices are embedded, non-compact submanifolds, depending on the fundamental angular frequencies $\omega_1$ and $\omega_2$, a non-degenerate helix in $\sx^3$ can either be periodic (when it is a compact embedded submanifold) or be dense in a flat 2-torus contained in $\sx^3$ (when the image of the helix is not an embedded submanifold of $\sx^3$). This divergence of global behavior from the flat case is the main topic of this paper.

Of course, the same questions can be asked in any number of spatial dimensions, and for other Riemannian manifolds. Here, for simplicity we consider the special case of $\sx^3$, which also allows us to use only elementary calculus-based methods in our investigations. Our methods will likely generalize to round spheres of any number of dimensions.

\section{Main results} \label{sec-mainresults}
We consider $\sx^3$ to be embedded in the Euclidean space $\rl^4$ in the natural way as the hypersurface $\{ x_1^2+x_2^2+x_3^2+x_4^2=1\}$, and endow $\sx^3$ with the Riemannian metric induced from $\rl^4$. This entails no loss of generality because the metric so induced is the same as the standard round metric of $\sx^3$ with constant sectional curvature $1$. To state our results concisely, let us introduce the following definition:

\begin{definition} \label{def-lissajous}
A smooth curve $\bs{\gamma}$ in $\rl^4$ will be called a {\em Lissajous curve} if there are  numbers $ \omega_2> \omega_1\geq0$ and vectors $\ba_1, \bb_1, \ba_2, \bb_2\in \rl^4$ such that, for each $t$, 
\begin{align}\label{eq-lissajous}
    \bs{\gamma}(t)=\cos(\omega_1 t)\ba_1+ \sin(\omega_1 t)\bb_1+\cos(\omega_2 t)\ba_2+\sin(\omega_2 t)\bb_2.
\end{align}
We will call $\omega_1$ and $\omega_2$ the {\em fundamental angular frequencies} of the curve $\bs{\gamma}$ and $\ba_1, \bb_1, \ba_2, \bb_2$ the {\em coefficient vectors} of $\bs{\gamma}$.
\end{definition}

Therefore, a Lissajous curve, in our sense, can be thought of as the trajectory of a point in $\rl^4$ which oscillates with frequency $\omega_1$ in the $\ba_1 \bb_1$-plane and with frequency $\omega_2$ in the $\ba_2\bb_2$-plane. Note also that the projection of $\bs{\gamma}$ on any 2-dimensional linear subspace different from the $\ba_1\bb_1$ and $\ba_2\bb_2$-planes is a planar Lissajous curve in the usual sense of the term \cite[pp. 114-115]{katok}. We are now ready to describe helices in $\sx^3$:

\begin{theorem}\label{thm-main} 
Let $\kappa, \tau \geq 0$ be given numbers where, if $\kappa = 0$, then $\tau = 0$. 
\begin{enumerate}[leftmargin=*] \itemsep5pt
    \item There exists a helix $\bs{\gamma}:\rl \to\sx^3$ with constant curvature $\kappa$ and torsion $\tau$.
    
    \item Such a helix $\bs{\gamma}$ is a Lissajous curve in the form of Equation \eqref{eq-lissajous}. 
    
    \item The fundamental angular frequencies of $\bs{\gamma}$ are distinct and are given by
    \begin{align}
        \omega_1 & = \sqrt{\frac{\chi^2 - \sqrt{\chi^4 - 4 \tau^2}}{2}}\label{eq-omega1} \\
        \omega_2 & = \sqrt{\frac{\chi^2 + \sqrt{\chi^4 - 4 \tau^2}}{2}}\label{eq-omega2}
    \end{align}
    with 
    \begin{align}
        \chi^2 = \kappa^2 + \tau^2 + 1. \label{eq-chidef}
    \end{align}

    \item If $\kappa > 0$, then the frequencies $\omega_1$ and $\omega_2$ satisfy
        \begin{align}\label{eq-omegaconstraints}
            \omega_2 > 1 > \omega_1.
        \end{align}
   
    \item If $\tau \neq 0$ then the four coefficient vectors $\ba_1, \bb_1, \ba_2,\bb_2$ are orthogonal in $\rl^4$, and their magnitudes are given by
        \begin{align}\label{eq-a1b1}
            \abs{\ba_1}^2 = \abs{\bb_1}^2 = \frac{1-\omega_2^2}{\omega_1^2-\omega_2^2}
        \end{align}
        and
        \begin{align}\label{eq-a2b2}
            \abs{\ba_2}^2 = \abs{\bb_2}^2 = \frac{1-\omega_1^2}{\omega_2^2-\omega_1^2}.
        \end{align}
        
    \item If $\tau = 0$, then $\bs{\gamma}$ is a circle given by 
    $$\bs{\gamma}(t) = \ba_1 + \cos(\omega t) \ba_2 + \sin(\omega t) \bb_2$$
    where $\omega = \sqrt{\kappa^2 + 1}$. Further, the coefficient vectors $\ba_1, \ba_2, \bb_2$ are mutually orthogonal and we have
        \begin{align*}
            \abs{\ba_2} &= \abs{\bb_2} = \frac{1}{\omega} \quad \text{ and } \quad \abs{\ba_1} = \sqrt{1 - \frac{1}{\omega^2}}.
       \end{align*}
\end{enumerate}
\end{theorem}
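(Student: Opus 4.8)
The plan is to transplant the Frenet--Serret system \eqref{eq-frenet} into the ambient space $\rl^4$, where it becomes a \emph{linear} system with constant coefficients. The starting observation is the formula for covariant differentiation along a curve $\bs\gamma$ in the unit hypersphere: the position vector $\bs\gamma$ is the unit normal of $\sx^3\subset\rl^4$ and the second fundamental form equals minus the induced metric, so for any vector field $V$ along $\bs\gamma$ tangent to $\sx^3$ one has $\frac{D}{dt}V = V' + \langle V,\bs\gamma'\rangle\bs\gamma$. Substituting $\mathbf T=\bs\gamma'$, $\mathbf N$, $\mathbf B$ into \eqref{eq-frenet} therefore gives
\begin{equation*}
\bs\gamma'=\mathbf T,\qquad \mathbf T'=-\bs\gamma+\kappa\mathbf N,\qquad \mathbf N'=-\kappa\mathbf T+\tau\mathbf B,\qquad \mathbf B'=-\tau\mathbf N,
\end{equation*}
i.e. $\Phi'=\Phi M$ for the $4\times4$ frame matrix $\Phi=[\bs\gamma\mid\mathbf T\mid\mathbf N\mid\mathbf B]$ and a fixed \emph{skew-symmetric} matrix $M=M(\kappa,\tau)$.

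To prove existence (part (1)) I would integrate this system from an arbitrary orthonormal frame $\Phi(0)\in O(4)$: since $M$ is skew-symmetric, $\Phi(t)=\Phi(0)e^{tM}\in O(4)$ for all $t$, so $\abs{\bs\gamma(t)}=1$ (the curve lies in $\sx^3$) and $\mathbf T,\mathbf N,\mathbf B$ remain orthonormal and tangent to $\sx^3$; reversing the covariant-derivative computation then shows $\bs\gamma$ has constant curvature $\kappa$ and torsion $\tau$ (after replacing $\mathbf B(0)$ by $-\mathbf B(0)$ if needed to make $\tau\ge0$, and reading the degenerate cases, where $\bs\gamma$ is a great or small circle, directly off the equations). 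Conversely, the Frenet frame of any helix with data $(\kappa,\tau)$ satisfies the same system, so by uniqueness of solutions any two such helices differ by left multiplication by an element of $O(4)$, that is, by an ambient isometry of $\sx^3$. As parts (2)--(6) only assert facts about norms, mutual angles, and the frequency content of $\bs\gamma$ — all $O(4)$-invariant — it suffices to establish them for the helix constructed above.

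Eliminating $\mathbf T,\mathbf N,\mathbf B$ (differentiate the first equation three more times and back-substitute) yields $\bs\gamma''''+\chi^2\bs\gamma''+\tau^2\bs\gamma=0$ with $\chi^2=\kappa^2+\tau^2+1$ as in \eqref{eq-chidef}; this already gives the Lissajous form of part (2) once the roots of $r^4+\chi^2r^2+\tau^2$ are found (when $\tau=0$ the characteristic polynomial has a double root at $r=0$, but the ensuing term linear in $t$ is excluded because $\abs{\bs\gamma}\equiv1$). Solving the quadratic in $r^2$ and using the factorization $\chi^4-4\tau^2=(\kappa^2+(\tau-1)^2)(\kappa^2+(\tau+1)^2)$, which is $\ge0$ and is $>0$ under our hypotheses (if $\kappa=0$ then $\tau=0\ne1$), shows both values of $r^2$ are real and $\le0$, so $r=\pm i\omega_1,\pm i\omega_2$ with $\omega_1,\omega_2$ as in \eqref{eq-omega1}--\eqref{eq-omega2}; distinctness of the frequencies is exactly strictness of $\chi^4-4\tau^2>0$, giving part (3). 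For part (4), observe that $\omega_1^2,\omega_2^2$ are the roots of $p(x)=x^2-\chi^2x+\tau^2$ and $p(1)=1-\chi^2+\tau^2=-\kappa^2<0$, so when $\kappa>0$ the roots straddle $1$, i.e. $\omega_1^2<1<\omega_2^2$, which is \eqref{eq-omegaconstraints}.

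Finally, for parts (5) and (6), write $\bs\gamma(t)=\cos(\omega_1t)\ba_1+\sin(\omega_1t)\bb_1+\cos(\omega_2t)\ba_2+\sin(\omega_2t)\bb_2$ and evaluate $\bs\gamma(0),\bs\gamma'(0),\bs\gamma''(0),\bs\gamma'''(0)$ using the first-order system: this produces two $2\times2$ linear systems showing $\ba_1,\ba_2$ are explicit combinations of the orthonormal pair $\{\bs\gamma(0),\mathbf N(0)\}$ and (when $\tau\ne0$) $\bb_1,\bb_2$ explicit combinations of the orthonormal pair $\{\mathbf T(0),\mathbf B(0)\}$. Since these two pairs span orthogonal $2$-planes of $\rl^4$, automatically $\ba_i\perp\bb_j$; the relations $\ba_1\perp\ba_2$ and $\bb_1\perp\bb_2$ and the magnitude formulas \eqref{eq-a1b1}--\eqref{eq-a2b2} then follow from the algebraic identities $\omega_1^2+\omega_2^2=\chi^2$, $\omega_1^2\omega_2^2=\tau^2$, and $\chi^2-\tau^2-1=\kappa^2$. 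Part (6) is the case $\omega_1=0$ (whence $\bb_1$ drops out, $\ba_1$ becomes a constant vector, and $\omega=\omega_2=\sqrt{\kappa^2+1}$). The only genuine idea needed is the reduction in the first paragraph; after that everything is forced, and the sole real labor is the linear-algebra bookkeeping in parts (5)--(6), which I expect to be the most error-prone rather than the most conceptually difficult step.
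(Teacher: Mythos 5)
Your proposal is correct, and for parts (1)--(4) it is essentially the paper's own route: adjoin $\bs{\gamma}'=\mathbf{T}$ to the Frenet--Serret system in $\rl^4$ to get a constant-coefficient linear system with skew-symmetric matrix; the paper diagonalizes that matrix, while you eliminate the frame to get the scalar equation $\bs{\gamma}''''+\chi^2\bs{\gamma}''+\tau^2\bs{\gamma}=0$, whose characteristic polynomial is the same quartic \eqref{eq-discriminant}, and your handling of the double root at $0$ when $\tau=0$ via boundedness is a legitimate substitute for the paper's appeal to semisimplicity of a skew-symmetric matrix. Your part (4) argument, evaluating $p(x)=x^2-\chi^2x+\tau^2$ at $x=1$ to get $p(1)=-\kappa^2<0$, is cleaner than the paper's inequality chase, and your factorization $\chi^4-4\tau^2=(\kappa^2+(\tau-1)^2)(\kappa^2+(\tau+1)^2)$ makes the distinctness claim in part (3) more explicit than the paper does. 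The genuine divergence is in parts (5)--(6): the paper proves a standalone statement about arbitrary constant-speed Lissajous curves lying in $\sx^3$ (Proposition~\ref{prop-lissajous}), using the Fourier-type uniqueness Lemma~\ref{fourierTrick} applied to $\abs{\bs{\gamma}}^2\equiv1$ and to the speed, which forces a separate treatment of the resonance $\omega_2=3\omega_1$, and then applies that proposition to both $\bs{\gamma}$ and $\bs{\gamma}'$ to extract the magnitudes. You instead solve the two $2\times2$ systems coming from $\bs{\gamma}(0),\bs{\gamma}''(0)$ and $\bs{\gamma}'(0),\bs{\gamma}'''(0)$, obtaining $\ba_1,\ba_2$ in the plane spanned by $\bs{\gamma}(0),\mathbf{N}(0)$ and $\bb_1,\bb_2$ in the plane spanned by $\mathbf{T}(0),\mathbf{B}(0)$, and then orthogonality plus \eqref{eq-a1b1}--\eqref{eq-a2b2} follow from $\omega_1^2+\omega_2^2=\chi^2$, $\omega_1^2\omega_2^2=\tau^2$ and the consequent identity $(\omega_2^2-1)(1-\omega_1^2)=\kappa^2$; I verified this bookkeeping and it does close, including the $\tau=0$ case of part (6). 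Your route avoids the Fourier lemma and the $\omega_2=3\omega_1$ case split (it only needs $\omega_1\neq\omega_2$, which you establish), and as a bonus it expresses the coefficient vectors explicitly in the initial Frenet frame, which already yields the congruence statement of Theorem~\ref{thm-existence}(1); the paper's route buys a reusable fact about all constant-speed Lissajous curves in $\sx^3$, independent of the Frenet apparatus. One cosmetic point: your sign $\mathbf{T}'=-\bs{\gamma}+\kappa\mathbf{N}$ differs from the first line of \eqref{eq-frenets3} as printed, but this amounts to replacing $\mathbf{N}$ by $-\mathbf{N}$ and affects neither the characteristic polynomial nor any conclusion.
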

Several interesting features may be noted here: 
\begin{enumerate}[leftmargin=*] \itemsep5pt
    \item The local existence of helices follows from the existence theorem for             solutions of systems of ordinary differential equations on manifolds. However,      we prove the existence of helices directly by solving the Frenet-Serret             equations and obtain an explicit representation of the solution.

    \item When $\kappa, \tau > 0$, the curve $\bs{\gamma}$ may be thought of as the trajectory of a motion consisting of two superimposed circular motions in perpendicular planes: one at a ``slow'' frequency $\omega_1 < 1$ and the other at a 
    ``fast'' frequency $\omega_2> 1$. This global behavior arises from the fact that the curve $\bs{\gamma}$ must lie on the compact surface $\sx^3$. Observe that there is no such restriction on the angular frequency $\omega$ of the Euclidean helix given by Equation \eqref{eq-euclidean}.
    
    \item When $\kappa=0$ by definition we have $\tau = 0$. Such a curve is a {\em geodesic}, i.e., its unit tangent field is auto-parallel along the curve. Therefore, geodesics on the sphere $\sx^3$ are of the form
    \[\bs{\gamma}(t) = \cos(t) \ba + \sin(t)\bb \]
    where $\abs{\ba} = \abs{\bb} = 1$ and $\ba, \bb$ are mutually orthogonal. Thus, we recapture the well known fact that geodesics in $\sx^3$ are great circles.
\end{enumerate}

We now turn to the question of uniqueness and periodicity of helices. First, note that if $\bs{\gamma}$ is a helix in a Riemannian 3-manifold, $M$, and $f:M\to M$ is a self-isometry of $M$, then $f\circ \bs{\gamma}$ is also a helix in $M$ with the same curvature and torsion as that of $\bs{\gamma}$. In $\rl^3$, the converse holds, i.e., helices with the same curvature and torsion are congruent under an isometry of $\rl^3$. We show that a similar fact holds in $\sx^3$ and we also determine when helices are periodic.

Recall that a {\em Clifford torus} is a Riemannian 2-manifold which is the metric product of two circles. Clearly, a Clifford torus is flat, i.e., its Gaussian curvature vanishes identically. It is well-known that there are Clifford tori embedded in the sphere $\sx^3$, e.g. for $0<\lambda<1$, the surface in $\rl^4$ given by
\[ \mathcal{C}_{\lambda}=\left\{ \mathbf{x}\in\rl^4 : x_1^2+ x_2^2=\lambda, x_3^2+x_4^2=1 - \lambda\right\}\]
is clearly contained in $\sx^3$, and is therefore a Clifford torus in $\sx^3$ which is flat in the Riemannian metric induced by the round metric of $\sx^3$.

\begin{theorem}\label{thm-existence} Let $\kappa, \tau \geq 0$ be given numbers where, if $\kappa = 0$, then $\tau = 0$. 
\begin{enumerate}[leftmargin=*] \itemsep5pt
    \item If $\bs{\alpha}$ and $\bs{\beta}$ are two helices in $\sx^3$ with the same curvature $\kappa$ and torsion $\tau$, then $\bs{\alpha}$ and $\bs{\beta}$ are congruent, i.e., there is a Riemannian isometry $f:\sx^3\to \sx^3$ such that $\bs{\beta}= f \circ \bs{\alpha}$.
    
    \item A helix $\bs{\gamma}$ is periodic if and only if the ratio of the angular frequencies 
    \begin{align}
        \cfrac{\omega_1}{\omega_2} = \sqrt{\frac{\chi^2 - \sqrt{\chi^4 - 4 \tau^2}}{\chi^2 + \sqrt{\chi^4 - 4 \tau^2}}}
    \end{align}
    is a rational number, where $\chi^2 = \kappa^2 + \tau^2 + 1$.
    
    \item If $\kappa, \tau > 0$, there exists a Clifford torus $\mathbb{T}^2_{\bs{\gamma}}$ contained in $\sx^3$ such that the image of $\bs{\gamma}$ lies on $\mathbb{T}^2_{\bs{\gamma}}$.
    
    \item If $\kappa, \tau > 0$ and $\cfrac{\omega_1}{\omega_2}$ is irrational, the image of $\bs{\gamma}$ is dense in the torus $\mathbb{T}_{\bs{\gamma}}^2$.
\end{enumerate}
\end{theorem}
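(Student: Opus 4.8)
The plan is to derive all four statements from the explicit Lissajous description of helices in Theorem~\ref{thm-main}, together with, for part~(4), the classical density of irrational linear flows on a $2$-torus. For part~(1): by Theorem~\ref{thm-main}(1) and uniqueness for the Frenet--Serret system we may assume $\bs\alpha,\bs\beta\colon\rl\to\sx^3$, and Theorem~\ref{thm-main}(2)--(6) shows each of $\bs\alpha,\bs\beta$ is a Lissajous curve \eqref{eq-lissajous} with the \emph{same} frequencies $\omega_1<\omega_2$ (given by \eqref{eq-omega1}--\eqref{eq-chidef}, so depending only on $\kappa,\tau$) and with coefficient vectors whose orthogonality relations and magnitudes are determined by $\kappa,\tau$ alone. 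Write the coefficient vectors of $\bs\alpha$ as $\ba_1,\bb_1,\ba_2,\bb_2$ and of $\bs\beta$ as $\ba_1',\bb_1',\ba_2',\bb_2'$. If $\tau>0$, then by Theorem~\ref{thm-main}(5) each quadruple is an orthogonal basis of $\rl^4$ of nonzero vectors with $\abs{\ba_i}=\abs{\ba_i'}$ and $\abs{\bb_i}=\abs{\bb_i'}$, so the linear map $f$ with $f(\ba_i)=\ba_i'$, $f(\bb_i)=\bb_i'$ carries an orthogonal basis to one of equal norms, hence $f\in O(4)$; substituting into \eqref{eq-lissajous} gives $f\circ\bs\alpha=\bs\beta$, and as every element of $O(4)$ restricts to a Riemannian isometry of $\sx^3$ this is the required $f$. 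If $\tau=0$ (the circle case of Theorem~\ref{thm-main}(6), including the great-circle sub-case $\kappa=0$) the coefficient vectors span a subspace of $\rl^4$ of dimension at most $3$; one extends the two adapted orthonormal systems to orthonormal bases of $\rl^4$ and builds $f$ the same way. The step requiring care is this $\tau=0$ case, where the Frenet frame is not determined by the curve, so one argues directly with the coefficient vectors rather than by matching Frenet frames and appealing to ODE uniqueness.

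For part~(2), suppose first $\tau\neq0$. By Theorem~\ref{thm-main}(5) the coefficient vectors are nonzero and mutually orthogonal, $\omega_1>0$, and $P_1=\operatorname{span}(\ba_1,\bb_1)$, $P_2=\operatorname{span}(\ba_2,\bb_2)$ are orthogonal $2$-planes. The orthogonal projection of $\bs\gamma$ onto $P_1$ is the uniform circular motion $t\mapsto\cos(\omega_1 t)\ba_1+\sin(\omega_1 t)\bb_1$ of minimal period $2\pi/\omega_1$, and its projection onto $P_2$ has minimal period $2\pi/\omega_2$; since the projections are determined by $\bs\gamma$, a number $T>0$ satisfies $\bs\gamma(t+T)=\bs\gamma(t)$ for all $t$ iff $\omega_1 T\in2\pi\mathbb{Z}$ and $\omega_2 T\in2\pi\mathbb{Z}$, and such a $T$ exists precisely when $\omega_1/\omega_2\in\mathbb{Q}$ (if $\omega_1/\omega_2=p/q$ in lowest terms, take $T=2\pi p/\omega_1$). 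When $\tau=0$ the helix is a circle or great circle, hence manifestly periodic, and \eqref{eq-omega1} gives $\omega_1=0$, so $\omega_1/\omega_2=0\in\mathbb{Q}$; in all cases $\omega_1/\omega_2$ is the quotient of \eqref{eq-omega1} by \eqref{eq-omega2}, which matches the expression displayed in~(2).

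For part~(3), let $\kappa,\tau>0$. Adding \eqref{eq-a1b1} and \eqref{eq-a2b2} gives $\abs{\ba_1}^2+\abs{\ba_2}^2=1$, and \eqref{eq-omegaconstraints} together with $\omega_1>0$ shows $\lambda:=\abs{\ba_1}^2\in(0,1)$. By Theorem~\ref{thm-main}(5) the vectors $\ba_1/\sqrt\lambda$, $\bb_1/\sqrt\lambda$, $\ba_2/\sqrt{1-\lambda}$, $\bb_2/\sqrt{1-\lambda}$ form an orthonormal basis of $\rl^4$, and in the associated orthonormal coordinates $\bs\gamma(t)=(\sqrt\lambda\cos(\omega_1 t),\ \sqrt\lambda\sin(\omega_1 t),\ \sqrt{1-\lambda}\cos(\omega_2 t),\ \sqrt{1-\lambda}\sin(\omega_2 t))$ lies on $\{x_1^2+x_2^2=\lambda,\ x_3^2+x_4^2=1-\lambda\}$. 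In these coordinates this set is the Clifford torus $\mathcal{C}_\lambda$ of the Introduction; being the image of $\mathcal{C}_\lambda$ under an element of $O(4)$, it is itself a Clifford torus $\mathbb{T}^2_{\bs\gamma}\subset\sx^3$, and it contains the image of $\bs\gamma$.

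Finally, for part~(4), keep $\kappa,\tau>0$ and parametrize $\mathbb{T}^2_{\bs\gamma}$ by angle coordinates $(\theta_1,\theta_2)\in(\rl/2\pi\mathbb{Z})^2$, so the image of $\bs\gamma$ becomes the orbit $\{(\omega_1 t,\omega_2 t)\bmod 2\pi:t\in\rl\}$ of the linear flow $\phi_s(\theta_1,\theta_2)=(\theta_1+\omega_1 s,\theta_2+\omega_2 s)$. One then invokes the classical fact that such an orbit is dense precisely when $\omega_1/\omega_2$ is irrational: the return of the orbit to $\{\theta_1=0\}$, at times $t\in(2\pi/\omega_1)\mathbb{Z}$, traces the orbit of $0$ under rotation of the $\theta_2$-circle by $2\pi\,\omega_2/\omega_1$, which is dense iff $\omega_2/\omega_1\notin\mathbb{Q}$ (Kronecker's theorem); since the closure of the full orbit is $\phi$-invariant and, in the irrational case, contains $\{0\}\times(\rl/2\pi\mathbb{Z})$, and since $\phi_s(\{0\}\times(\rl/2\pi\mathbb{Z}))$ sweeps out $\mathbb{T}^2_{\bs\gamma}$ as $s$ varies, that closure is all of $\mathbb{T}^2_{\bs\gamma}$. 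As $\omega_1/\omega_2$ is irrational exactly when $\omega_2/\omega_1$ is, this gives~(4). The only genuine obstacles I anticipate are Kronecker's theorem, used as a black box in~(4), and the degenerate-case bookkeeping in~(1); everything else is routine once Theorem~\ref{thm-main} is available.
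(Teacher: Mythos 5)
Your proposal is correct and follows essentially the same route as the paper: congruence via an element of $O(4)$ matching the coefficient vectors (with the $\tau=0$ case handled separately), periodicity by forcing $\omega_1 T,\omega_2 T\in 2\pi\mathbb{Z}$ from the orthogonal Lissajous decomposition, the Clifford torus $\{x_1^2+x_2^2=\abs{\ba_1}^2,\ x_3^2+x_4^2=\abs{\ba_2}^2\}$ using $\abs{\ba_1}^2+\abs{\ba_2}^2=1$, and density via the linear flow on that torus. The only cosmetic differences are that you compare projections onto the two coordinate planes where the paper compares coefficients directly, and in part (4) you prove the density of the irrational linear flow from Kronecker's theorem via a return-map argument instead of citing it as the paper does.
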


\FloatBarrier
\subsection{Visualization of Helices}
One way to visualize $\sx^3$ is to use the sterographic projection $\sigma:\sx^3\setminus\{p\} \to \rl^3$ where $p$ is a point in $\sx^3$ which serves as the pole of the projection. It is well-known that $\sigma$ is conformal, i.e. it preserves angles but not lengths. Using Wolfram Mathematica\texttrademark, we produced visualizations of two helices in $\sx^3$ which are shown in Figures~\ref{fig-dense}~and~\ref{fig-period} below. Each of these pictures represents two distinct perspective projections onto $\rl^2$ of the sterographic projection of the helix, where $p$ is chosen to not be on the helix. The helix in Figure~\ref{fig-dense} is non-periodic and therefore dense in a Clifford torus. The helix in Figure~\ref{fig-period} is periodic and therefore an embedded curve in $\sx^3$. The hue and brightness of the following curves are functions of the fourth coordinate of the curve $\bs{\gamma}$ in its embedding in $\rl^4$. 
\begin{figure}[H]
    \centering
    \includegraphics[width=0.5\textwidth]{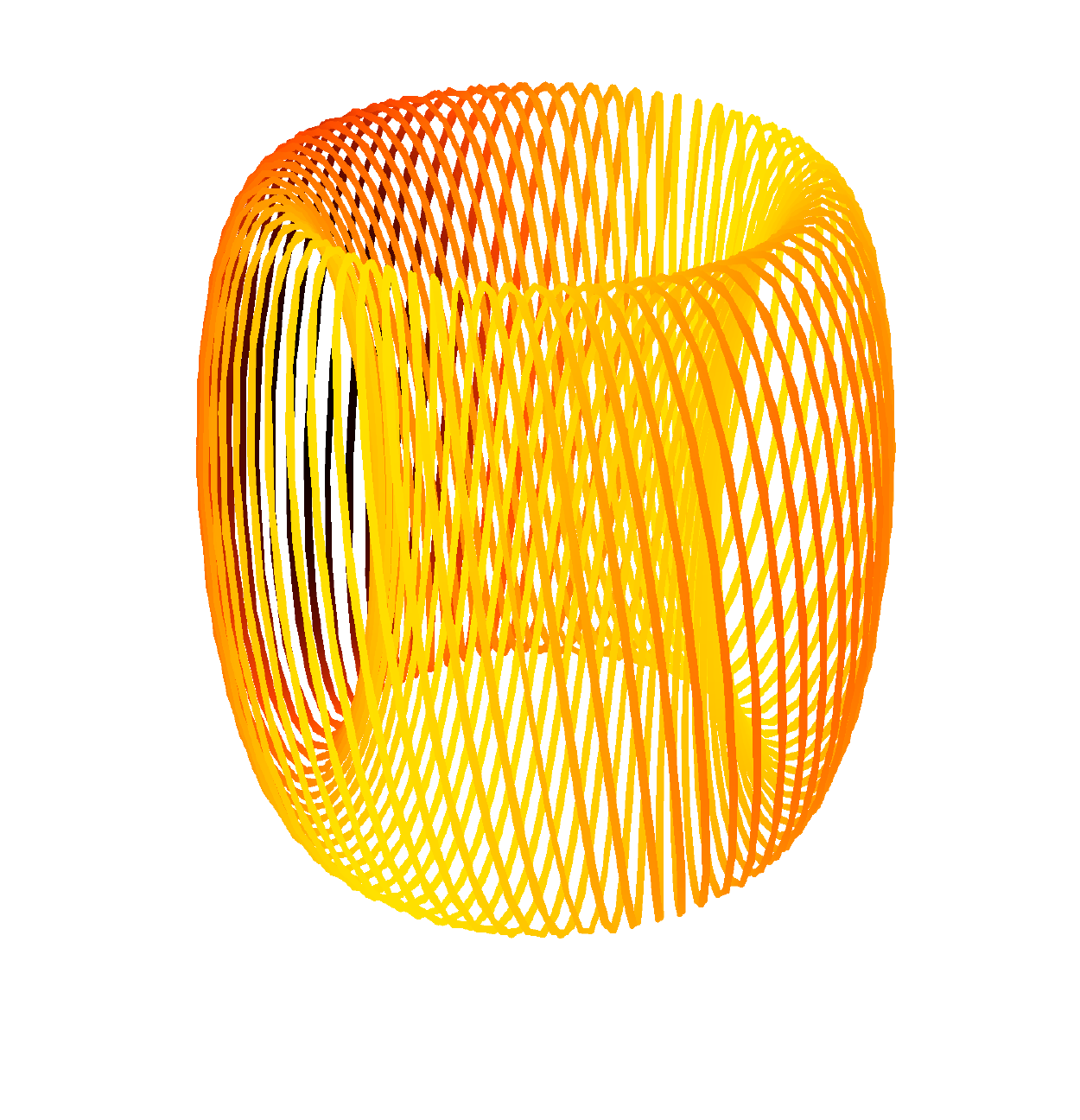}\includegraphics[width =0.5\textwidth] {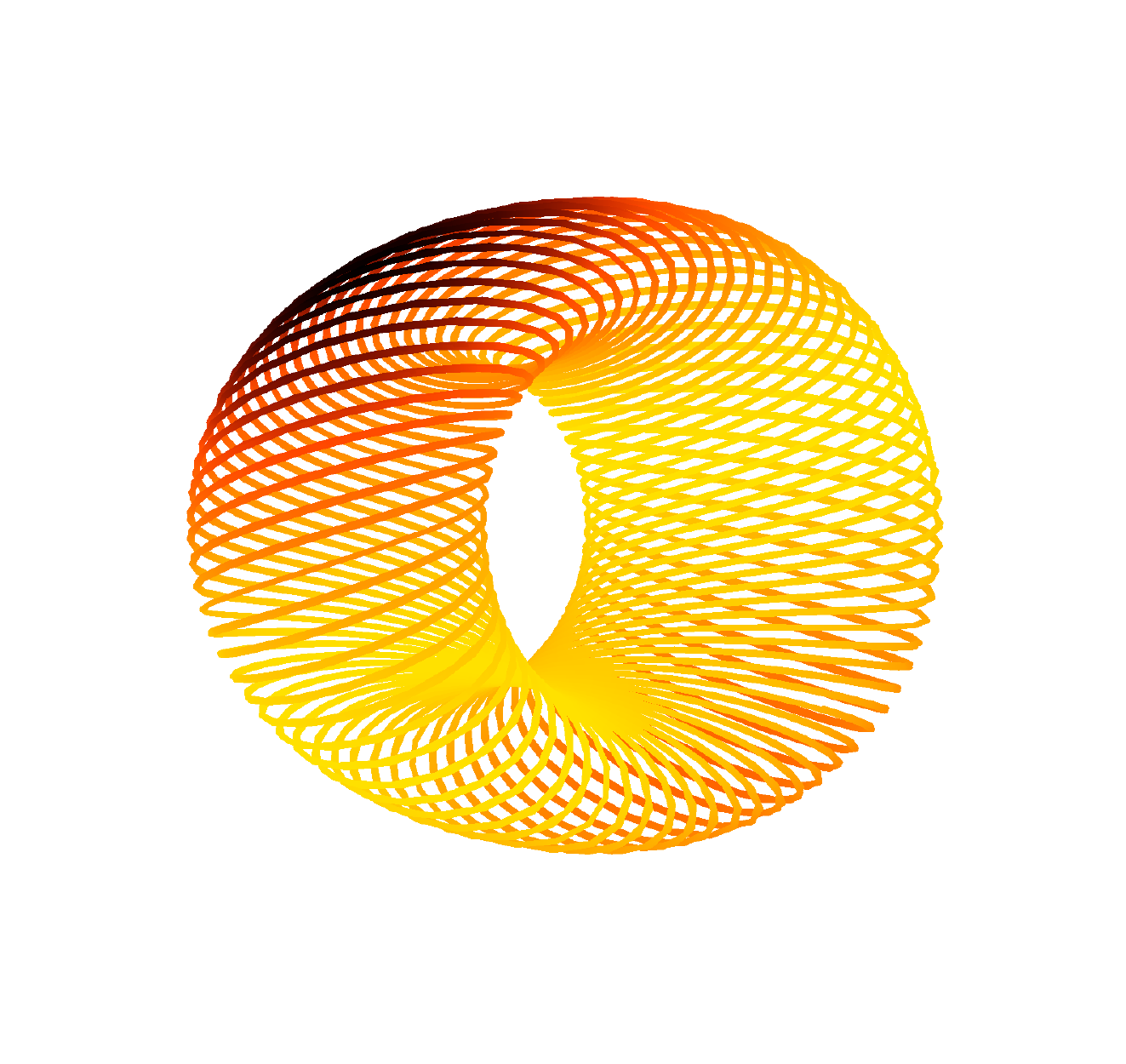}
    \caption{Two views of a dense helix in $\sx^3$ with $\kappa = \frac{5\sqrt{3}}{4}$ and $\tau = \frac{\sqrt{29}}{4}$. The corresponding fundamental angular frequencies are then $\omega_1=\frac{1}{2}, \omega_2=\frac{\sqrt{29}}{2},$ and thus, their ratio is the irrational number $\frac{\omega_2}{\omega_1} = \sqrt{29}$.}
    \label{fig-dense}
\end{figure}
\begin{figure}[H]
    \centering
    \includegraphics[width=0.5\textwidth]{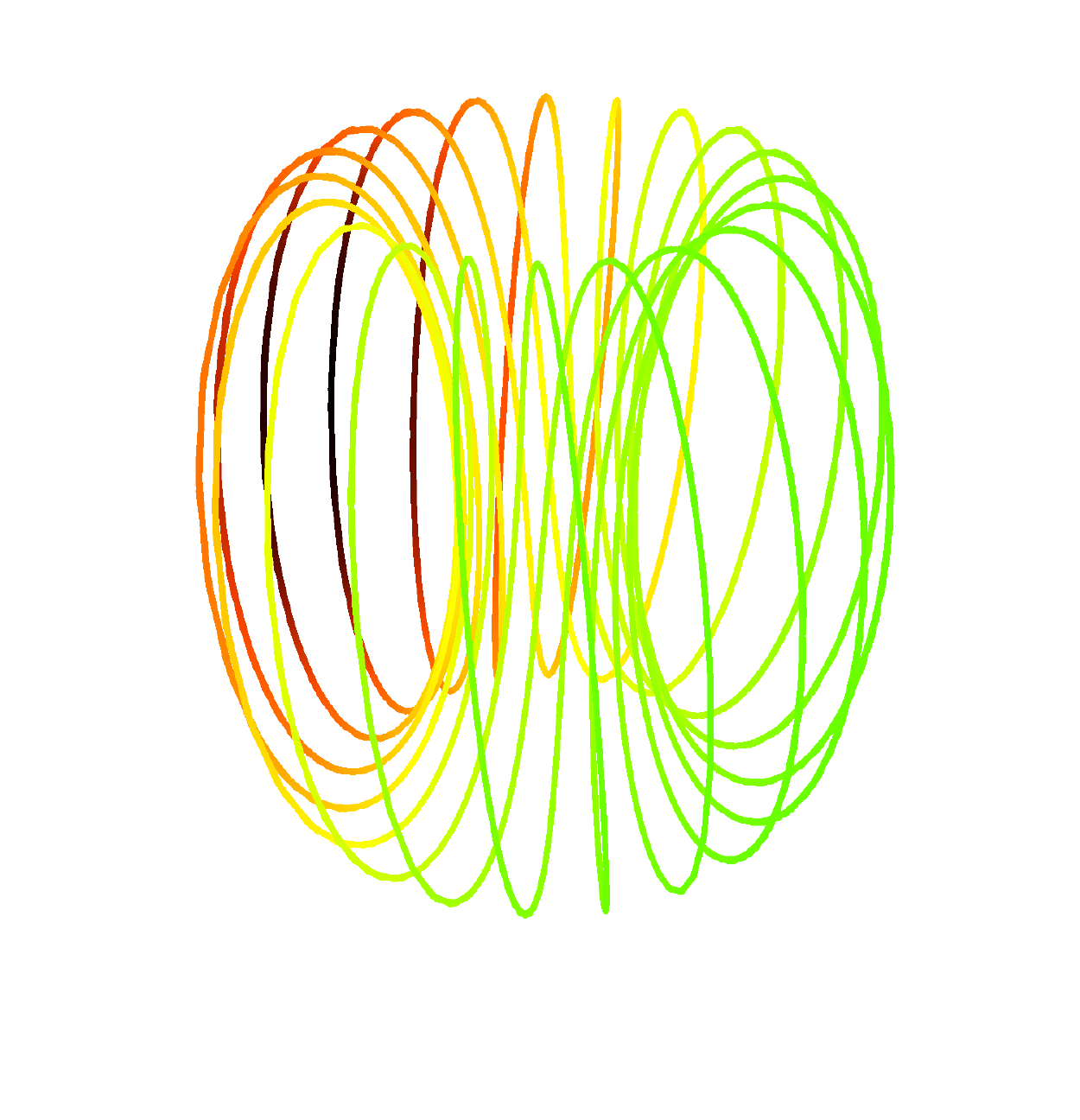}\includegraphics[width=0.5\textwidth]{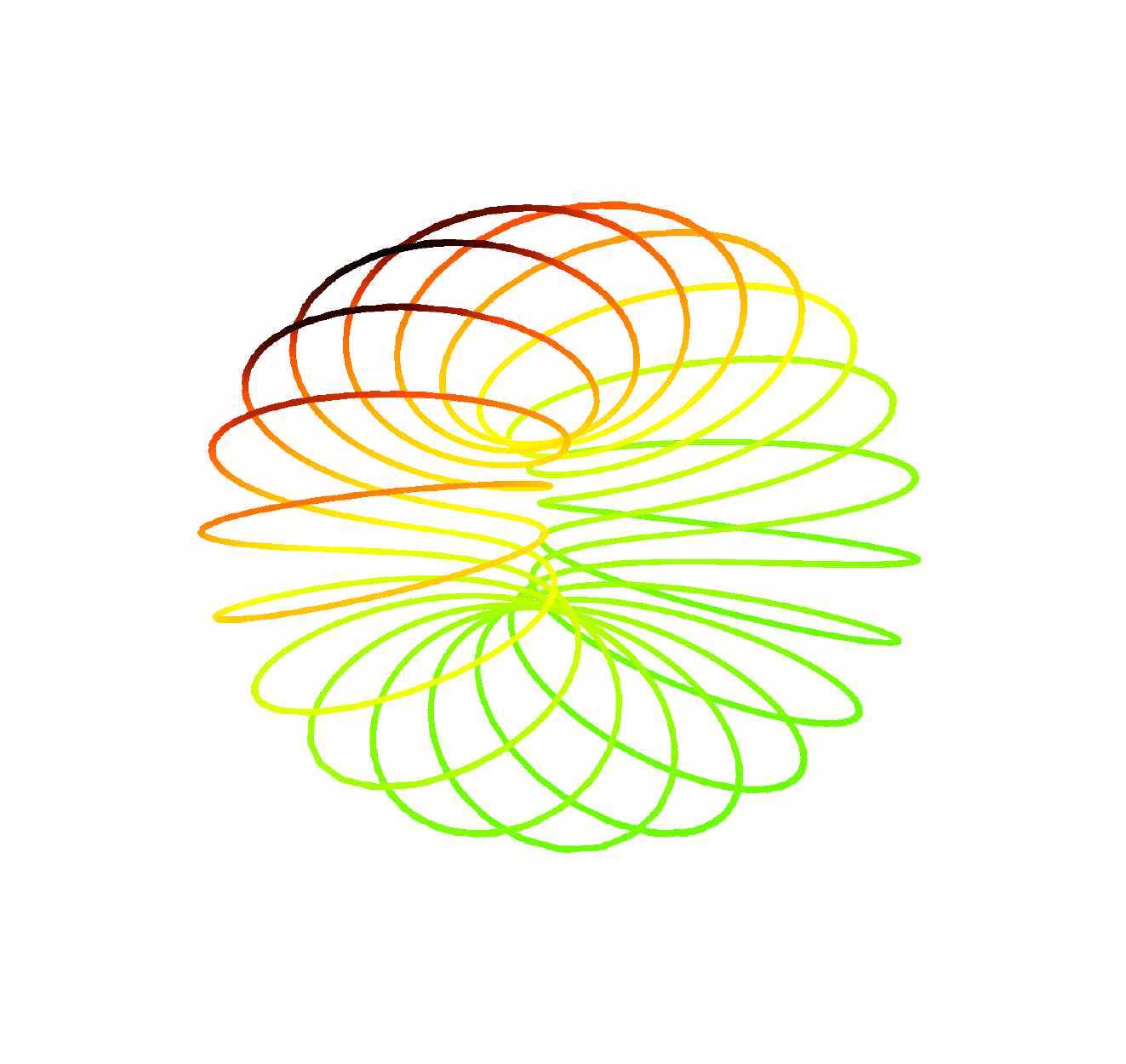}
    \caption{Two views of a periodic helix in $\sx^3$ with $\kappa =\frac{\sqrt{15}}{3}$ and $\tau = \frac{5}{12}$. The corresponding fundamental angular frequencies are then $\omega_1=\frac{1}{4}, \omega_2=\frac{5}{3},$ and thus, their ratio is the rational number $\frac{\omega_2}{\omega_1} = \frac{20}{3}$.}
    \label{fig-period}
\end{figure}
\FloatBarrier

\newpage
\section{The Frenet-Serret Equations} \label{sec-frenet}
\subsection{The Frenet-Serret Equations in a 3-Dimensional Riemannian Manifold} \label{sec-3Dmanifold}
Consider a three dimensional Riemannian manifold $({M}, \langle,\rangle)$ and an arc length parameterized curve $\boldsymbol{\gamma}: I \to M$ where $I\subset \rl$ is an open interval. Let $\frac{D}{dt}$ represent the covariant derivative of a vector field along a curve (parametrized by $t$).  Let $\mathbf{T} = \boldsymbol{\gamma}'$ denote the {\em unit tangent vector field} of $\boldsymbol{\gamma}$.  Since $\left\langle \mathbf{T}(t), \mathbf{T}(t) \right\rangle = 1$ for each $t$ we have,
\[
    0 = \frac{d}{dt} \left\langle \mathbf{T}(t), \mathbf{T}(t) \right\rangle = 2 \left\langle \mathbf{T}(t), \frac{D}{dt} \mathbf{T}(t) \right\rangle.
\]
Then, the curvature function $\kappa$ is defined as
\begin{align} \label{eq-kappa}
    \kappa(t) = \left| \frac{D}{dt} \mathbf{T}(t) \right|.
\end{align}
We will assume that either $\kappa(t) \neq 0$ for all $t$ or that $\kappa \equiv 0$. In the case where $\kappa$ never vanishes, we define the {\em normal vector field} to $\boldsymbol{\gamma}$ by
\begin{align*}
    \mathbf{N}(t) = \frac{1}{\kappa(t)} \frac{D}{dt} \mathbf{T}(t).
\end{align*}
Then $\mathbf{N}$ is a unit vector field along $\boldsymbol{\gamma}$ which is always orthogonal to $\mathbf{T}$.  The definition of $\mathbf{N}$ gives the first Frenet-Serret equation
\begin{align} \label{eq-F1}
    \frac{D}{dt} \mathbf{T}(t) = \kappa(t) \mathbf{N}(t).
\end{align}
Similarly, since $\left\langle \mathbf{N}(t), \mathbf{N}(t) \right\rangle = 1$ for each $t \in I$
\begin{align*}
    2 \left\langle \mathbf{N}(t), \frac{D}{dt} \mathbf{N}(t) \right\rangle &= 0,
\end{align*}
and  because $\left\langle \mathbf{T}(t), \mathbf{N}(t) \right\rangle = 0$ for each $t \in I$
\begin{align*}
    \left\langle \frac{D}{dt} \mathbf{T}(t), \mathbf{N}(t) \right\rangle + \left\langle \mathbf{T}(t), \frac{D}{dt} \mathbf{N}(t) \right\rangle &= \kappa(t) + \left\langle \mathbf{T}(t), \frac{D}{dt} \mathbf{N}(t) \right\rangle = 0
\end{align*}
by Equation \eqref{eq-F1}.  Then,
\begin{align*}
    \frac{D}{dt} \mathbf{N}(t) = -\kappa(t) \mathbf{T}(t) + {\text{vector orthogonal to } \mathbf{T}(t) \text{ and } \mathbf{N}(t)}.
\end{align*}
We define a torsion function $\tau$ by
\begin{align}
    \tau(t) = \left| \frac{D}{dt} \mathbf{N}(t) + \kappa(t) \mathbf{T}(t). \right|\label{eq-tau}
\end{align}
We will assume that either $\tau \neq 0$ for all $t$, or $\tau \equiv 0$. If $\tau(t) \neq 0$ for all $t$, then we set
\begin{align*}
    \mathbf{B}(t) = \frac{1}{\tau(t)} \left( \frac{D}{dt} \mathbf{N}(t) + \kappa(t) \mathbf{T}(t) \right)
\end{align*}
such that $\mathbf{B}$ is a unit vector field along $\bs{\gamma}$ which is orthogonal to $\mathbf{T} \text{ and } \mathbf{N}$ for all $t$. If $\tau \equiv 0$, then we choose $\mathbf{B}$ to be an auto-parallel vector field along $\bs{\gamma}$ such that the vectors $\mathbf{T}(t), \mathbf{N}(t) \text{ and } \mathbf{B}(t)$ form an orthonormal basis of $T_{\bs{\gamma}(t)} \sx^3$.
In both cases we have
\begin{align} \label{eq-F2}
    \frac{D}{dt} \mathbf{N}(t) = - \kappa(t) \mathbf{T}(t) + \tau(t) \mathbf{B}(t).
\end{align}
Finally, since $\left\langle \mathbf{B}(t), \mathbf{B}(t) \right\rangle = 1$ for each $t \in I$
\begin{align*}
    \left\langle \mathbf{B}(t), \frac{D}{dt} \mathbf{B}(t) \right\rangle = 0,
\end{align*}
and because  $\left\langle \mathbf{N}(t), \mathbf{B}(t) \right\rangle = 0$ for each $t \in I$
\begin{align*}
    \left\langle \frac{D}{dt} \mathbf{N}(t), \mathbf{B}(t) \right\rangle + \left\langle \mathbf{N}(t), \frac{D}{dt} \mathbf{B}(t) \right\rangle &= \tau(t) + \left\langle \mathbf{N}(t), \frac{D}{dt} \mathbf{B}(t) \right\rangle
\end{align*}
by Equation \eqref{eq-F2}.  Then,
\begin{align*}
    \frac{D}{dt} \mathbf{B}(t) &= -\tau(t) \mathbf{N}(t) + {\text{vector orthogonal to } \mathbf{N}(t) \text{ and } \mathbf{B}(t)}.\\
    &\implies \frac{D}{dt} \mathbf{B}(t) = -\tau(t) \mathbf{N}(t) + c \mathbf{T}(t)
\end{align*}
since $\mathbf{T}(t)$ is orthogonal to $\mathbf{N}(t) \text{ and } \mathbf{B}(t)$ for each $t \in I$ by construction.  By taking the dot product of both sides with $\mathbf{T}(t)$ we have
\begin{align*}
    \left\langle \frac{D}{dt} \mathbf{B}(t), \mathbf{T}(t) \right\rangle = -\tau(t) \mathbf{N}(t) + c \mathbf{T}(t) = c
\end{align*}
and by the product rule
\begin{align*}
    \left\langle \frac{D}{dt} \mathbf{B}(t), \mathbf{T}(t) \right\rangle &= \frac{d}{dt} \left\langle \mathbf{B}(t), \mathbf{T}(t) \right\rangle - \left\langle \mathbf{B}(t), \frac{D}{dt} \mathbf{T}(t) \right\rangle = 0\\
    &\implies c = 0.
\end{align*}
Therefore we have our third and final Frenet-Serret equation
\begin{align} \label{eq-F3}
    \frac{D}{dt} \mathbf{B}(t) = -\tau(t) \mathbf{N}(t)
\end{align}
Equations \eqref{eq-F1}, \eqref{eq-F2} and \eqref{eq-F3} constitute the {\em Frenet-Serret equations} in a 3-dimensional Riemannian manifold and characterize the local geometry of the curve $\bs{\gamma}$. This concludes the derivation of the Frenet-Serret formulas in the case where $\kappa(t)\neq 0$ for all $t$. 

However, in the case where $\kappa \equiv 0$, we define $\tau \equiv 0$ and choose $\mathbf{N}, \mathbf{B}$ to be auto-parallel vector fields along $\bs{\gamma}$ such that the vectors $\mathbf{T}(t), \mathbf{N}(t), \text{ and } \mathbf{B}(t)$  form an orthonormal basis of $T_{\bs{\gamma}(t)}\sx^3$. Under this choice, the Frenet-Serret Formulas given in \eqref{eq-frenet} are again satisfied.

Note that we are not assuming that the manifold $M$ is orientable. In the case where $M$ is in fact oriented  (i.e., $M$ is orientable, and one of the two orientations is specified), there is a variant of the Frenet-Serret equations in which one assumes that the frame $\{\mathbf{T,N,B}\}$ is positively oriented. Then, one must allow the torsion $\tau$ to assume negative values. The Equations ~\ref{eq-frenet} continue to hold with this new interpretation. However, in this paper, we use the non-oriented form of the Frenet-Serret equations, where $\kappa$ and $\tau$ are always non-negative. Geometrically, this means that while considering helices in $\sx^3$, we disregard the chirality.

\subsection{The Frenet equations in \texorpdfstring{$\sx^3$}{S3}} \label{sec-frenetS3}
We begin by specializing the Frenet-Serret equations given in~\eqref{eq-frenet} to the case of the embedded sphere $\sx^3$ in $\rl^4$. Let 
    \[ \iota: \sx^3 \hookrightarrow \rl^4\]
be the natural embedding. Given a curve $\bs{\gamma}:I\to \sx^3$, where $I\subset\rl$ is an open interval, we may think of $\bs{\gamma}$ as a curve in $\rl^4$, by identifying $\bs{\gamma}$ with $\iota\circ\bs{\gamma}$. Similarly, given a vector field $\mathbf{v}$ along the curve $\bs{\gamma}$ which assigns to each point $t\in I$ a vector $\mathbf{v}(t)\in T_{\bs{\gamma}(t)}\sx^3$, we can identify $\mathbf{v}$ with the vector field $\iota_* \mathbf{v}$ along $\iota\circ\bs{\gamma}$, which assigns to the point $t\in I$ the vector $\iota_* \mathbf{v}(t)\in T_{\iota\circ\bs{\gamma}(t)}\rl^4$. In order to simplify notation, we adopt the following conventions:

\begin{enumerate}[leftmargin=*] \itemsep5pt
    
    \item Consistently identifying $\sx^3$ with the embedded image, we will omit the map $\iota$ and its pushforward $\iota_*$ from the notation. Thus, we will think of the $\bs{\gamma}$ in $\sx^3$ as a curve in $\rl^4$ whose image lies in $\sx^3$. Similarly, we will think of a vector field $\mathbf{v}$ in $\sx^3$ along $\bs{\gamma}$ as a vector field in $\rl^4$ along $\bs{\gamma}$ such that for each $t$, the vector $\mathbf{v}(t)\in T_{\bs{\gamma}(t)}\rl^4$ lies in the subspace $T_{\bs{\gamma}(t)}\sx^3$.
    
    \item We identify the tangent bundle $T\rl^4$ with $\rl^4\times\rl^4$. Therefore, all vector fields in $\rl^4$ may be identified with $\rl^4$-valued functions.
    
    \item Given a vector field $\mathbf{v}$ along a curve $\bs{\gamma}$ in $\sx^3$, by the previous two parts, we can identify it with a $\rl^4$-valued function. We will let $\mathbf{v}'$ denote its  derivative in the Euclidean space $\rl^4$, i.e., if $\mathbf{v}$ is represented using the natural coordinates as
        \[ \mathbf{v}(t)=(v_1(t), v_2(t), v_3(t), v_4(t)),\]
    where $v_j:I\to\rl^4$ is smooth, $j=1,\dots, 4$, then 
        \[ \mathbf{v}'(t)=(v_1'(t), v_2'(t), v_3'(t), v_4'(t)).\]
    Of course, this is nothing but a coordinate expression for the covariant derivative of the vector field $\mathbf{v}$ along $\bs{\gamma}$ with respect to the flat Euclidean metric of $\rl^4$.
    
\end{enumerate}

\begin{proposition}
Let $\bs{\gamma}:I\to \sx^3$ be a smooth curve in the three-sphere parametrized by arc length, and let $\mathbf{T,N,B}$ be its Frenet frame. Using the notational convention explained above, we think of $\mathbf{T,N,B}$ as functions from $I$ to $\rl^4$. Then, these vector valued functions satisfy the following differential equations:

\begin{align}
     \mathbf{T}'(t) + \bs{\gamma}(t) &= - \kappa(t) \mathbf{N}(t)\nonumber \\
     \mathbf{N}'(t) & = - \kappa(t) \mathbf{T}(t) + \tau(t) \mathbf{B}(t)\label{eq-frenets3} \\
     \mathbf{B}'(t) & = -\tau(t) \mathbf{N}(t)\nonumber
\end{align}
\end{proposition}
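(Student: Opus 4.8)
\medskip

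\noindent\emph{Proof strategy.}
The plan is to rewrite each of the covariant derivatives $\frac{D}{dt}\mathbf{T}$, $\frac{D}{dt}\mathbf{N}$, $\frac{D}{dt}\mathbf{B}$ occurring in the Frenet--Serret equations \eqref{eq-frenet} in terms of the corresponding $\rl^4$-derivatives $\mathbf{T}'$, $\mathbf{N}'$, $\mathbf{B}'$, and then read off \eqref{eq-frenets3}. The bridge between the two notions of derivative is the standard fact that, because $\sx^3$ carries the metric induced from $\rl^4$, covariant differentiation of a vector field along a curve in $\sx^3$ is the component tangent to $\sx^3$ of ordinary differentiation in $\rl^4$. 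Since the normal line to $\sx^3$ at a point $\bs{\gamma}(t)$ is spanned by the unit vector $\bs{\gamma}(t)$ itself, this says that for any vector field $\mathbf{v}$ along $\bs{\gamma}$ with values tangent to $\sx^3$,
\[
    \frac{D}{dt}\mathbf{v}(t)\;=\;\mathbf{v}'(t)-\langle\mathbf{v}'(t),\bs{\gamma}(t)\rangle\,\bs{\gamma}(t).
\]
In a self-contained write-up I would either invoke this as the Gauss formula for submanifolds of Euclidean space, or, to stay at the level of calculus in $\rl^4$, take the right-hand side as the definition of $\frac{D}{dt}$ along curves in $\sx^3$ and check that it has the defining properties of the covariant derivative of the round metric.

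With the displayed identity in hand, the next step is to evaluate the correction scalar $\langle\mathbf{v}',\bs{\gamma}\rangle$ for $\mathbf{v}=\mathbf{T},\mathbf{N},\mathbf{B}$. Each of $\mathbf{T},\mathbf{N},\mathbf{B}$ is tangent to $\sx^3$, hence $\langle\mathbf{v}(t),\bs{\gamma}(t)\rangle\equiv 0$; differentiating and using $\bs{\gamma}'=\mathbf{T}$ gives $\langle\mathbf{v}',\bs{\gamma}\rangle=-\langle\mathbf{v},\mathbf{T}\rangle$. Since $\{\mathbf{T},\mathbf{N},\mathbf{B}\}$ is orthonormal along $\bs{\gamma}$, this gives $\langle\mathbf{T}',\bs{\gamma}\rangle=-1$ and $\langle\mathbf{N}',\bs{\gamma}\rangle=\langle\mathbf{B}',\bs{\gamma}\rangle=0$, so the displayed identity becomes
\[
    \frac{D}{dt}\mathbf{T}=\mathbf{T}'+\bs{\gamma},\qquad \frac{D}{dt}\mathbf{N}=\mathbf{N}',\qquad \frac{D}{dt}\mathbf{B}=\mathbf{B}'.
\]
Substituting these into the Frenet--Serret equations \eqref{eq-frenet}, which hold for $\bs{\gamma}$ regarded as a curve in the Riemannian $3$-manifold $\sx^3$ (established in Section~\ref{sec-3Dmanifold}), yields the system \eqref{eq-frenets3}. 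The argument is uniform over all cases permitted by our conventions: the only inputs are that $\mathbf{T},\mathbf{N},\mathbf{B}$ form an orthonormal frame along $\bs{\gamma}$ with $\mathbf{T}=\bs{\gamma}'$ and that \eqref{eq-frenet} holds, so it is irrelevant whether $\kappa$ or $\tau$ vanishes.

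The one step deserving genuine care is the bridge identity relating $\frac{D}{dt}$ on $\sx^3$ to differentiation in $\rl^4$: this is where the unit sectional curvature of $\sx^3$ enters, and it is precisely what produces the extra term $\bs{\gamma}(t)$ in the first equation of \eqref{eq-frenets3}. No such term appears in the equations for $\mathbf{N}$ and $\mathbf{B}$ because $\mathbf{N}$ and $\mathbf{B}$ are orthogonal to $\mathbf{T}=\bs{\gamma}'$, which forces their correction scalars to vanish. Once that identity is pinned down, the remainder is the routine substitution just described, so I do not expect any real obstacle.
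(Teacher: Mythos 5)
Your proposal is correct and follows essentially the same route as the paper's own proof: the projection formula $\frac{D}{dt}\mathbf{v}(t)=\mathbf{v}'(t)-\left(\bs{\gamma}(t)\cdot\mathbf{v}'(t)\right)\bs{\gamma}(t)$, valid because $\bs{\gamma}(t)$ is a unit normal to $\sx^3$ at $\bs{\gamma}(t)$, followed by the computations $\mathbf{T}'\cdot\bs{\gamma}=-1$ and $\mathbf{N}'\cdot\bs{\gamma}=\mathbf{B}'\cdot\bs{\gamma}=0$ and substitution into \eqref{eq-frenet}. One small remark: this substitution (in your write-up and in the paper's proof alike) actually yields $\mathbf{T}'+\bs{\gamma}=+\kappa\mathbf{N}$, so the minus sign in the first displayed equation of the statement is a sign slip in the paper rather than a defect of your argument; the matrix $\mathbf{C}$ in \eqref{eq-C} indeed encodes $\mathbf{T}'=-\bs{\gamma}+\kappa\mathbf{N}$.
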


\begin{proof}
We begin by recalling the following fact from differential geometry \cite[Vol. III, p. 2]{spivak}. Let $M$ be an embedded submanifold of $\rl^N$, and for each point $x\in M$, let 
    \[ \mathcal{P}_x: T_x\rl^N\to T_xM\]
denote the orthogonal projection (where we identify $T_xM$ in the natural way with a subspace of $T_x\rl^N=\rl^N$).  We endow $M$ with the Riemannian metric induced by the Euclidean metric of $\rl^N$. Let $\bs{\gamma}:I\to M$ be a smooth curve in $M$, where $I\subset\rl$ is an open interval, and assume that $\bs{\gamma}$ is parametrized by arc length. If $\mathbf{v}$ is a vector field along $\bs{\gamma}$, it is well-known that the covariant derivative of $\mathbf{v}$ is given by
    \[ \frac{D}{dt} \mathbf{v}(t)= \mathcal{P}_{\bs{\gamma}(t)}\left(\mathbf{v}'(t)\right)\in T_{\bs{\gamma}(t)}M.\]
When $M$ is a hypersurface in $\rl^N$ and $x\in M$, we may write for $\mathbf{r}\in T_x\rl^N\simeq \rl^N$,
    \[ \mathcal{P}_x(\mathbf{r})= \mathbf{r}-\left(\mathbf{n}(x)\cdot \mathbf{r}\right) \mathbf{n}(x),\]
where $\mathbf{n}(x)$ denotes a unit vector normal to the hypersurface $M$ at the point $x$.
Consequently we obtain the following formula for differentiating a vector field $\mathbf{v}$ along the curve $\bs{\gamma}$:
    \[ \frac{D}{dt}\mathbf{v}(t)= \mathbf{v}'(t)-\left(\mathbf{n}(\bs{\gamma}(t))\cdot \mathbf{v}'(t)\right) \mathbf{n}(\bs{\gamma}(t)).\]
When $M=\sx^3$ in $\rl^4$, we may take for $\mathbf{x}\in \sx^3$
    \[ \mathbf{n}(\mathbf{x})= \mathbf{x},\]
so that

\begin{align}\label{eq-covderiv}
    \frac{D}{dt}\mathbf{v}(t)= \mathbf{v}'(t)-\left(\bs{\gamma}(t))\cdot \mathbf{v}'(t)\right) \bs{\gamma}(t)).
\end{align}

We now compute $\bs{\gamma}(t)\cdot \mathbf{v}'(t)$ when $\mathbf{v}$ is one of the Frenet frame vector fields $\mathbf{T,B,N}$. Note that the four vectors $\bs{\gamma}(t)$, $\mathbf{T}(t)$, $\mathbf{N}(t)$, and $\mathbf{B}(t)$ form an orthonormal set in $\mathbb{R}^4$.
Observe that for each $t$ we have the following equations:

\begin{align*}
    \mathbf{T}'(t) \cdot \bs{\gamma}(t) & = (\mathbf{T} \cdot \bs{\gamma})'(t)  - \mathbf{T}(t) \cdot \bs{\gamma}'(t) = 0 - (\mathbf{T}(t) \cdot \mathbf{T}(t)) = -1, \\
    \mathbf{N}'(t) \cdot \bs{\gamma}(t) & = (\mathbf{N} \cdot \bs{\gamma})'(t)  - \mathbf{N}(t) \cdot \bs{\gamma}'(t) = 0 - (\mathbf{N}(t) \cdot \mathbf{T}(t)) = 0, \\
    \mathbf{B}'(t) \cdot \bs{\gamma}(t) & = (\mathbf{B} \cdot \bs{\gamma})'(t)  - \mathbf{B}(t) \cdot \bs{\gamma}'(t) = 0 - (\mathbf{B}(t) \cdot \mathbf{T}(t)) = 0.
\end{align*}

\noindent In the first equation, we have used the fact that $\mathbf{T}=\bs{\gamma}'$. Using Equation~\eqref{eq-covderiv} and the above compuations, we  obtain the following represenations of the covariant derivatives of the Frenet frame:

\begin{align*}
    \frac{D}{dt}\mathbf{T}(t) & = \mathbf{T}'(t) + \bs{\gamma}(t) \\
    \frac{D}{dt}\mathbf{N}(t) & = \mathbf{N}'(t) \\
    \frac{D}{dt}\mathbf{B}(t) & = \mathbf{B}'(t).
\end{align*}

Combining these with the Frenet-Serret equations \eqref{eq-frenet} in a Riemannian three-manifold, Equations~\eqref{eq-frenets3} follow.
\end{proof}

\section{Lissajous curves in \texorpdfstring{$\sx^3$}{S3}} \label{sec-lissajouscurvesonS3}
In this section, we prove a few results that will be needed to complete the proof of Theorem~\ref{thm-main}. The following lemma will be required. 
\begin{lemma} \label{fourierTrick}
Let $\alpha_0, \alpha_1, \cdots, \alpha_N$ be distinct non-negative real numbers, and suppose that for each $t\geq 0$, we have
\begin{align} \label{f(t)}
    \sum_{k = 0}^N \left(a_k \sin{(\alpha_k t)} + b_k \cos{(\alpha_k t)} \right)=0,
\end{align}
where the coefficients $a_k, b_k$ are complex numbers. Then we have
$a_k=b_k=0$ for each $k$.
\end{lemma}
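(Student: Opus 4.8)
The plan is to prove this classical ``linear independence of exponentials'' fact by the standard Fourier-analytic averaging argument. First I would rewrite the hypothesis using complex exponentials: since $\sin(\alpha_k t) = \frac{1}{2i}(e^{i\alpha_k t} - e^{-i\alpha_k t})$ and $\cos(\alpha_k t) = \frac{1}{2}(e^{i\alpha_k t} + e^{-i\alpha_k t})$, the relation \eqref{f(t)} becomes $\sum_{k} \left( c_k e^{i\alpha_k t} + d_k e^{-i\alpha_k t}\right) = 0$ for all $t \geq 0$, where $c_k = \frac{b_k}{2} + \frac{a_k}{2i}$ and $d_k = \frac{b_k}{2} - \frac{a_k}{2i}$. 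Since the $\alpha_k$ are distinct and non-negative, the exponents $\{i\alpha_k\} \cup \{-i\alpha_k\}$ are all distinct except that $\alpha_0 = 0$ could occur, in which case $e^{i\alpha_0 t}$ and $e^{-i\alpha_0 t}$ coincide; I would handle that by noting that then the constant term is simply $c_0 + d_0 = b_0$, so it is cleaner to just say: the function $t \mapsto \sum_j e^{i\lambda_j t}\, e_j$ is identically zero, where the $\lambda_j$ range over a finite set of distinct real numbers and $e_j$ are the corresponding aggregated coefficients, and it suffices to show each $e_j = 0$.

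Next I would extract an individual coefficient by the averaging/orthogonality trick. Fix one frequency $\lambda_m$ among the distinct $\lambda_j$. Multiply the identity by $e^{-i\lambda_m t}$ and average over a long interval:
\begin{align*}
    0 = \lim_{S \to \infty} \frac{1}{S} \int_0^S \left( \sum_j e_j\, e^{i(\lambda_j - \lambda_m) t} \right) dt = \sum_j e_j \lim_{S \to \infty} \frac{1}{S}\int_0^S e^{i(\lambda_j - \lambda_m)t}\, dt.
\end{align*}
For $j \neq m$ we have $\lambda_j - \lambda_m \neq 0$, so $\frac{1}{S}\int_0^S e^{i(\lambda_j-\lambda_m)t}\,dt = \frac{e^{i(\lambda_j - \lambda_m)S} - 1}{i(\lambda_j-\lambda_m)S} \to 0$ as $S \to \infty$, while for $j = m$ the integrand is $1$ and the average is $1$. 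Hence the right-hand side collapses to $e_m$, giving $e_m = 0$. Since $m$ was arbitrary, all aggregated coefficients vanish. Unwinding the $2\times 2$ linear relation between $(c_k, d_k)$ and $(a_k, b_k)$ — which has determinant $-\frac{1}{i} \neq 0$ — then forces $a_k = b_k = 0$ for every $k$, with the degenerate $\alpha_0 = 0$ case giving directly $b_0 = 0$ and $a_0 \sin 0 \equiv 0$ imposing nothing, so one should note $a_0$ is irrelevant (or simply assume, as is implicit, that the $\sin(\alpha_0 t)$ term with $\alpha_0 = 0$ is absent/zero).

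I do not anticipate a serious obstacle here; this is a routine lemma. The one point requiring a little care is the bookkeeping around a possibly-zero frequency $\alpha_0$, since then $\sin(\alpha_0 t) \equiv 0$ contributes nothing and $\cos(\alpha_0 t) \equiv 1$ merges with no other term — this is harmless but should be mentioned so the claim ``$a_k = b_k = 0$'' is read correctly (the $a_0$ attached to the identically-zero function $\sin(0\cdot t)$ is simply unconstrained, though in the paper's applications $\alpha_0$ will be a genuine positive frequency or this term will not appear). An alternative to the Cesàro-average argument, if one prefers to avoid limits of integrals, is to differentiate the identity repeatedly and evaluate at $t = 0$, obtaining a Vandermonde-type linear system in the $2N+2$ coefficients whose matrix is invertible because the frequencies are distinct; I would present the averaging argument as the main line since it is the most transparent, and perhaps remark on the Vandermonde alternative in a sentence.
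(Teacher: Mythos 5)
Your proposal is correct and follows essentially the same route as the paper: rewrite the sum as a finite combination of complex exponentials with distinct real frequencies, multiply by $e^{-i\lambda_m t}$, average over $[0,S]$, and let $S\to\infty$ so that the off-diagonal averages vanish and each aggregated coefficient is forced to be zero, then invert the $2\times 2$ relation to recover $a_k=b_k=0$. Your careful remark about a possible zero frequency (where $\sin(0\cdot t)\equiv 0$ leaves $a_0$ unconstrained) is a point the paper handles only implicitly; otherwise the two arguments coincide.
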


\begin{proof}
We can assume without loss of generality that $\alpha_0=0$ (simply take $a_0=b_0=0$). For $k=1,\dots, N$, let us set $\alpha_{-k}= -\alpha_k$. Then, Equation \eqref{f(t)} takes the form
\begin{align}
    \sum_{k=-N}^N c_k e^{i\alpha_k t}=0 \label{eq-complex}
\end{align}
where
\begin{align*}
    c_k = \begin{cases} 
                \cfrac{a_{\abs{k}} + i b_{\abs{k}}}{2i}, &k>0\\
                \quad \quad a_0, &k=0\\
                \cfrac{-a_{\abs{k}} + i b_{\abs{k}}}{2i}, &k<0.
          \end{cases}
\end{align*}
For each $k \geq 0$, it follows that $c_k=c_{-k}=0$ if and only if $a_k=b_k=0$.

Fix an integer  $\ell$, $\abs{\ell}\leq N$,  and multiply both sides of Equation \eqref{eq-complex} by $e^{-i\alpha_\ell t}$. Integrating on the interval $[0,T]$ and dividing by $T$, we see that for each $T\geq 0$ we have
\begin{align}\label{average}
    \sum_{\substack{k=-N\\ k\not=\ell}}^N \frac{c_k}{T} \int_0^T e^{i(\alpha_k-\alpha_\ell)t} dt +c_\ell =0.
\end{align}
Note, however, that if $k\not = \ell$, we have
    \[ \abs{\int_0^T e^{i(\alpha_k-\alpha_\ell)t}dt} \leq \abs{\frac{e^{i(\alpha_k-\alpha_\ell)T}-1}{i(\alpha_k-\alpha_\ell)}}\leq \frac{2}{\abs{\alpha_k-\alpha_\ell}}.\]
Since for each $k$, this is bounded independently of $T$, as $T\to \infty$, each term in the first sum of Equation \eqref{average} goes to 0, which shows that $c_\ell=0$. Therefore, $a_\ell = b_\ell = 0$. Since $\ell$ is arbitrary, the lemma is proved.
\end{proof}
We will also need the following proposition.
\begin{proposition} \label{prop-lissajous}
    Suppose that the Lissajous curve given by
    Equation \eqref{eq-lissajous} lies in $\sx^3$.
    \begin{itemize}[leftmargin=7mm] \itemsep5pt
        \item[(a)] If $\omega_1 \neq 0$ and $\bs{\gamma}$ has constant speed, then the coefficient vectors of $\bs{\gamma}$ satisfy the following relations:\vspace{1mm}
        \begin{enumerate}[leftmargin=*] \itemsep5pt
            \item $\ba_1, \bb_1, \ba_2, \bb_2$ are orthogonal
            \item $\abs{\ba_1} = \abs{\bb_1}$ and $\abs{\ba_2} = \abs{\bb_2}$
            \item $|\ba_1|^2 + |\ba_2|^2 = 1$.
        \end{enumerate}
    \item[(b)] If $\omega_1 = 0$, then:\vspace{1mm}
        \begin{enumerate}[leftmargin=*] \itemsep5pt
            \item $\ba_1, \ba_2, \bb_2$ are orthogonal
            \item $\abs{\ba_2} = \abs{\bb_2}$
            \item $|\ba_1|^2 + |\ba_2|^2 = 1$.
        \end{enumerate}
    \end{itemize}

\end{proposition}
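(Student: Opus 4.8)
The plan is to convert the two hypotheses — that $\bs{\gamma}$ lies on $\sx^3$, so $\abs{\bs{\gamma}(t)}^2\equiv 1$, and, in part (a), that $\bs{\gamma}$ has constant speed, so $\abs{\bs{\gamma}'(t)}^2\equiv c$ for a constant $c$ — into trigonometric identities and then to peel off coefficients using Lemma~\ref{fourierTrick}. Expanding $\abs{\bs{\gamma}(t)}^2$ by means of $\cos^2\theta=\tfrac12(1+\cos2\theta)$, $\sin^2\theta=\tfrac12(1-\cos2\theta)$, $2\sin\theta\cos\theta=\sin2\theta$ and the product-to-sum formulas, one writes it as a linear combination of the functions $1,\ \cos2\omega_1 t,\ \sin2\omega_1 t,\ \cos2\omega_2 t,\ \sin2\omega_2 t,\ \cos(\omega_2\pm\omega_1)t,\ \sin(\omega_2\pm\omega_1)t$, whose coefficients are quadratic in the coefficient vectors: the coefficient of $\cos2\omega_i t$ is $\tfrac12(\abs{\ba_i}^2-\abs{\bb_i}^2)$, that of $\sin2\omega_i t$ is $\ba_i\cdot\bb_i$, that of $\cos(\omega_2-\omega_1)t$ is $\ba_1\cdot\ba_2+\bb_1\cdot\bb_2$ while that of $\cos(\omega_2+\omega_1)t$ is $\ba_1\cdot\ba_2-\bb_1\cdot\bb_2$, that of $\sin(\omega_2-\omega_1)t$ is $\ba_1\cdot\bb_2-\bb_1\cdot\ba_2$ while that of $\sin(\omega_2+\omega_1)t$ is $\ba_1\cdot\bb_2+\bb_1\cdot\ba_2$, and the constant term is $\tfrac12\sum_i(\abs{\ba_i}^2+\abs{\bb_i}^2)$.

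Part (b) is immediate. When $\omega_1=0$ the term $\sin(\omega_1 t)\bb_1$ vanishes identically, so $\bs{\gamma}(t)=\ba_1+\cos(\omega_2 t)\ba_2+\sin(\omega_2 t)\bb_2$ and $\abs{\bs{\gamma}(t)}^2-1$ is a combination of the five functions $1,\cos\omega_2 t,\sin\omega_2 t,\cos2\omega_2 t,\sin2\omega_2 t$; since $\omega_2>\omega_1=0$ these frequencies are distinct, so Lemma~\ref{fourierTrick} kills every coefficient. This yields $\ba_1\cdot\ba_2=\ba_1\cdot\bb_2=\ba_2\cdot\bb_2=0$, hence the orthogonality of $\ba_1,\ba_2,\bb_2$; $\abs{\ba_2}^2-\abs{\bb_2}^2=0$, hence $\abs{\ba_2}=\abs{\bb_2}$; and $\abs{\ba_1}^2+\tfrac12(\abs{\ba_2}^2+\abs{\bb_2}^2)=1$, hence $\abs{\ba_1}^2+\abs{\ba_2}^2=1$. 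No constant-speed hypothesis is used here.

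For part (a), with $0<\omega_1<\omega_2$, I would first note that the only coincidence possible among the frequencies $0,2\omega_1,2\omega_2,\omega_2-\omega_1,\omega_2+\omega_1$ is $2\omega_1=\omega_2-\omega_1$, i.e.\ $\omega_2=3\omega_1$. If $\omega_2\neq3\omega_1$, all five are distinct and Lemma~\ref{fourierTrick} applied to $\abs{\bs{\gamma}(t)}^2-1$ annihilates all nine coefficients, giving at once $\abs{\ba_i}=\abs{\bb_i}$ and $\ba_i\cdot\bb_i=0$ for $i=1,2$; then $\ba_1\cdot\ba_2+\bb_1\cdot\bb_2=0=\ba_1\cdot\ba_2-\bb_1\cdot\bb_2$ and $\ba_1\cdot\bb_2-\bb_1\cdot\ba_2=0=\ba_1\cdot\bb_2+\bb_1\cdot\ba_2$, forcing all four of these products to vanish; and finally the constant term gives $\abs{\ba_1}^2+\abs{\ba_2}^2=1$. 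This is exactly (1)--(3). If $\omega_2=3\omega_1$, the frequencies $2\omega_1$ and $\omega_2-\omega_1$ merge, and Lemma~\ref{fourierTrick} applied to $\abs{\bs{\gamma}(t)}^2-1$ now only yields, at that frequency, $\tfrac12(\abs{\ba_1}^2-\abs{\bb_1}^2)+(\ba_1\cdot\ba_2+\bb_1\cdot\bb_2)=0$ and $\ba_1\cdot\bb_1+(\ba_1\cdot\bb_2-\bb_1\cdot\ba_2)=0$, the relations at the still-isolated frequencies $2\omega_2$ and $\omega_1+\omega_2$ being unchanged. Here the constant-speed hypothesis enters: expanding $\abs{\bs{\gamma}'(t)}^2$ in the same way, the coefficient of $\cos(\omega_2-\omega_1)t$ works out to $-\omega_1^2\cdot\tfrac12(\abs{\ba_1}^2-\abs{\bb_1}^2)+\omega_1\omega_2(\ba_1\cdot\ba_2+\bb_1\cdot\bb_2)$ and that of $\sin(\omega_2-\omega_1)t$ to $-\omega_1^2(\ba_1\cdot\bb_1)+\omega_1\omega_2(\ba_1\cdot\bb_2-\bb_1\cdot\ba_2)$; setting these to zero and writing $P=\tfrac12(\abs{\ba_1}^2-\abs{\bb_1}^2)$, $Q=\ba_1\cdot\ba_2+\bb_1\cdot\bb_2$, the cosine equations read $P+Q=0$ and $-\omega_1 P+\omega_2 Q=0$, which (since $\omega_1>0$ and $\omega_1+\omega_2>0$) force $P=Q=0$, and the sine equations force $\ba_1\cdot\bb_1=0$ and $\ba_1\cdot\bb_2-\bb_1\cdot\ba_2=0$ in the same way. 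Combining $P=Q=0$ and $\ba_1\cdot\bb_1=0$ with the relations $\ba_1\cdot\ba_2-\bb_1\cdot\bb_2=0$, $\ba_1\cdot\bb_2+\bb_1\cdot\ba_2=0$ from the frequency $\omega_1+\omega_2$, and with $\abs{\ba_2}=\abs{\bb_2}$, $\ba_2\cdot\bb_2=0$ from $2\omega_2$, recovers all the orthogonality relations and magnitude equalities, and the constant term again gives $\abs{\ba_1}^2+\abs{\ba_2}^2=1$.

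The main — essentially the only — obstacle is the resonant case $\omega_2=3\omega_1$: one must check that this really is the unique collision among the five relevant frequencies, so that the relations attached to $2\omega_2$ and $\omega_1+\omega_2$ continue to be obtained for free, and one must keep the $\omega_1^2$ and $\omega_1\omega_2$ weights straight in the expansion of $\abs{\bs{\gamma}'}^2$, since it is exactly the inequality $\omega_1^2\neq\omega_1\omega_2$ (that is, $\omega_1\neq\omega_2$) that makes the resulting $2\times2$ linear system nonsingular and lets the constant-speed hypothesis resolve the ambiguity. Everything else is routine expansion followed by an application of Lemma~\ref{fourierTrick}.
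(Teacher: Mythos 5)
Your proposal is correct and follows essentially the same route as the paper: expand $\abs{\bs{\gamma}}^2-1$ (and, in the resonant case $\omega_2=3\omega_1$, also $\abs{\bs{\gamma}'}^2-C^2$) into the frequencies $0,2\omega_1,2\omega_2,\omega_2\pm\omega_1$, apply Lemma~\ref{fourierTrick}, and use the constant-speed relations to break the degeneracy at the merged frequency $2\omega_1=\omega_2-\omega_1$. Your explicit observation that this is the only possible frequency collision, and your packaging of the resolution as a nonsingular $2\times2$ system in $P,Q$ (and in the sine coefficients), is just a streamlined form of the paper's combination of its Equations (gr2),(gr4) with (gpr2),(gpr4).
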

 
\begin{proof}
For use in the later portions of this proof, we will first compute $\abs{\bs{\gamma}(t)}^2$.  Using Equation \eqref{eq-lissajous} and the fact that $\bs{\gamma}$ lies in $\sx^3$, for each $t$ we have
\begin{align}
    \bs{\gamma}(t) \cdot \bs{\gamma}(t) = 1 = & |\mathbf{A}_1|^2\cos^2(\omega_1 t) + |\mathbf{B}_1|^2 \sin^2(\omega_1 t) + |\mathbf{A}_2|^2 \cos^2(\omega_2 t) + |\mathbf{B}_2|^2 \sin^2(\omega_2 t) \nonumber\\
    &+2(\mathbf{A}_1 \cdot \mathbf{B}_1) \cos(\omega_1 t) \sin(\omega_1 t) + 2(\mathbf{A}_1 \cdot \mathbf{A}_2) \cos(\omega_1 t)\cos(\omega_2 t)\nonumber \\
    &+2(\mathbf{A}_1 \cdot \mathbf{B}_2) \cos(\omega_1 t) \sin(\omega_2 t) + 2(\mathbf{B}_1 \cdot \mathbf{A}_2) \sin(\omega_1 t) \cos(\omega_2 t)\nonumber\\
    &+2(\mathbf{B}_1 \cdot \mathbf{B}_2) \sin(\omega_1 t) \sin(\omega_2 t) + 2(\mathbf{A}_2 \cdot \mathbf{B}_2) \cos(\omega_2 t) \sin(\omega_2 t)\nonumber \\
    = & \left( \frac{|\mathbf{A}_1|^2 + |\mathbf{B}_1|^2 + |\mathbf{A}_2|^2 + |\mathbf{B}_2|^2}{2} \right) + \left( \frac{|\mathbf{A}_1|^2 - |\mathbf{B}_1|^2}{2} \right) \cos{(2 \omega_1 t)}\nonumber \\
    & + \left( \mathbf{A}_1 \cdot \mathbf{B}_1 \right) \sin{(2 \omega_1 t)} + \left( \frac{|\mathbf{A}_2|^2 - |\mathbf{B}_2|^2}{2} \right) \cos{(2 \omega_2 t)} + \left( \mathbf{A}_2 \cdot \mathbf{B}_2 \right) \sin{(2 \omega_2 t)}\nonumber \\
    & + \left( \mathbf{A}_1 \cdot \mathbf{A}_2 - \mathbf{B}_1 \cdot \mathbf{B}_2 \right) \cos{((\omega_1 + \omega_2) t)} + \left( \mathbf{A}_1 \cdot \mathbf{B}_2 + \mathbf{B}_1 \cdot \mathbf{A}_2 \right) \sin{((\omega_1 + \omega_2) t)}\nonumber \\
    & + \left( \mathbf{B}_1 \cdot \mathbf{B}_2 + \mathbf{A}_1 \cdot \mathbf{A}_2 \right) \cos{((\omega_2 - \omega_1) t)} + \left( \mathbf{A}_1 \cdot \mathbf{B}_2 - \mathbf{B}_1 \cdot \mathbf{A}_2 \right) \sin{((\omega_2 - \omega_1) t)}\label{eq-gdotg}
\end{align}
First, we prove part (a) of the proposition. Let u begin by assuming that $\omega_2 \neq 3 \omega_1$. Since $\omega_2 \neq 3 \omega_1$ and $\omega_1 \neq 0$, we see that the five numbers 
$$0,\ 2 \omega_1,\ 2 \omega_2,\ \omega_2 - \omega_1,\ \text{ and }\ \omega_2 + \omega_1$$ 
are all distinct. By Lemma \ref{fourierTrick}, each of the coefficients in the expression for $\bs{\gamma}(t) \cdot \bs{\gamma}(t) -1$ vanishes, as in the left hand side of Equation \eqref{f(t)}. Thus, from the coefficients of Equation~\eqref{eq-gdotg}, we obtain: 
\begin{align*}
    \left.\begin{aligned}
        &\frac{|\mathbf{A}_1|^2 + |\mathbf{B}_1|^2 + |\mathbf{A}_2|^2 + |\mathbf{B}_2|^2}{2} = 1 \\
        &\cfrac{|\mathbf{A}_1|^2 - |\mathbf{B}_1|^2}{2} = 0\\
        &\mathbf{A}_1 \cdot \mathbf{B}_1 = 0\\
        &\cfrac{|\mathbf{A}_2|^2 - |\mathbf{B}_2|^2}{2} = 0\\
        &\mathbf{A}_2 \cdot \mathbf{B}_2 = 0\\
        &\mathbf{A}_1 \cdot \mathbf{A}_2 - \mathbf{B}_1 \cdot \mathbf{B}_2 = 0\\
        &\mathbf{A}_1 \cdot \mathbf{B}_2 + \mathbf{B}_1 \cdot \mathbf{A}_2 = 0\\
        &\mathbf{B}_1 \cdot \mathbf{B}_2 + \mathbf{A}_1 \cdot \mathbf{A}_2 = 0\\
        &\mathbf{A}_1 \cdot \mathbf{B}_2 - \mathbf{B}_1 \cdot \mathbf{A}_2 = 0
    \end{aligned}\right\}
\end{align*}
which yields the following:
\begin{align}
    &|\mathbf{A}_1|^2 + |\mathbf{B}_1|^2 + |\mathbf{A}_2|^2 + |\mathbf{B}_2|^2 = 2 \label{DC}\\
    &|\mathbf{A}_1| = |\mathbf{B}_1| \label{a1=a2}\\
    &|\mathbf{A}_2| = |\mathbf{B}_2| \label{a3=a4}\\
    &\mathbf{A}_1 \cdot \mathbf{B}_1 = \mathbf{A}_1 \cdot \mathbf{A}_2 = \mathbf{A}_1 \cdot \mathbf{B}_2 = \mathbf{B}_1 \cdot \mathbf{A}_2 = \mathbf{B}_1 \cdot \mathbf{B}_2  = \mathbf{A}_2 \cdot \mathbf{B}_2 = 0 \label{orth}
\end{align}
Equation \eqref{orth} shows that the vectors $\ba_1, \bb_1, \ba_2,\ \text{and }\bb_2$ are orthogonal, which is conclusion 1 of the proposition. Moreover, Equations \eqref{a1=a2} and \eqref{a3=a4} are precisely conclusion 2 of the proposition. Further, recognize that by using Equations (\ref{DC}-\ref{a3=a4}), we get 
\begin{align}
    |\mathbf{A}_1|^2 + |\mathbf{A}_2|^2 = 1 \label{A1^2+A2^2=1}
\end{align}
which is conclusion 3 of the proposition.

To complete the proof of part (a) of the proposition, we now consider the case when $\omega_2 = 3 \omega_1$. Let us set $\omega_1 = \omega$ and thus, $\omega_2 = 3\omega$. Therefore, we have 
$$2 \omega_1 = 2\omega, \quad 2\omega_2 = 6 \omega, \quad \omega_2 - \omega_1 = 2 \omega, \text{ and } \omega_2 + \omega_1 = 4\omega.$$ 
Notice, $2\omega_1 = \omega_2 - \omega_1 = 2\omega$, so in Equation~\eqref{eq-gdotg} there are only 4. Therefore, the relation
$\bs{\gamma}(t) \cdot \bs{\gamma}(t) = 1$  applied to Equation \eqref{eq-gdotg} and Lemma~\ref{fourierTrick} now give, 
\begin{align}
    \label{gr1}
    &|\textbf{A}_1|^2 + |\textbf{A}_2|^2 + |\textbf{B}_1|^2 + |\textbf{B}_2|^2  = 2\\
    \label{gr2}
    &|\textbf{A}_1|^2 - |\textbf{B}_1|^2 + 2(\textbf{A}_1 \cdot \textbf{A}_2 + \textbf{B}_1 \cdot \textbf{B}_2) = 0\\
    \label{gr3}
    &|\textbf{A}_2| = |\textbf{B}_2|\\
    \label{gr4}
    &\textbf{A}_1 \cdot \textbf{B}_1+ \textbf{A}_1 \cdot \textbf{B}_2 - \textbf{B}_1 \cdot \textbf{A}_2 = 0\\
    \label{gr5}
    &\textbf{A}_2 \cdot \textbf{B}_2 = 0\\
    \label{gr6}
    &\textbf{A}_1 \cdot \textbf{A}_2 = \textbf{B}_1 \cdot \textbf{B}_2\\
    \label{gr7}
    &\textbf{A}_1 \cdot \textbf{B}_2 = -\textbf{B}_1 \cdot \textbf{A}_2
\end{align}
Since $\bs{\gamma}$ has constant speed, there exists a $C>0$ such that for all $t$, we have $\abs{\bs{\gamma}'(t)}=C$. The relation $\bs{\gamma}'(t) \cdot \bs{\gamma}'(t) = C^2$ yields (using Equation \eqref{eq-gdotg}) 
\begin{align}
0 = &\left(\frac{\omega^2|\mathbf{B}_1|^2 + 9\omega^2|\mathbf{B}_2|^2 + \omega^2 |\mathbf{A}_1|^2 + 9\omega^2|\mathbf{A}_2|^2}{2} - C^2 \right)\nonumber \\
&+ \left(\frac{\omega^2|\mathbf{B}_1|^2 - \omega^2|\mathbf{A}_1|^2}{2} + 3 \omega^2 \mathbf{B}_1 \cdot \mathbf{B}_2 + 3 \omega^2 \mathbf{A}_1 \cdot \mathbf{A}_2 \right)\cos(2 \omega t)\nonumber\\
& + 9 \omega^2\left(\frac{|\mathbf{B}_2|^2 - |\mathbf{A}_2|^2}{2} \right) \cos(6 \omega t)
- (\omega^2(\mathbf{A}_1 \cdot \mathbf{B}_1) +3 \omega^2 (\mathbf{B}_1 \cdot \mathbf{A}_2) - 3\omega^2(\mathbf{B}_2 \cdot \mathbf{A}_1) )\sin(2 \omega t)\nonumber\\  &-9 \omega^2 (\mathbf{A}_2 \cdot \mathbf{B}_2) \sin(6 \omega t)
+ 3 \omega^2(\mathbf{B}_1 \cdot \mathbf{B}_2 - \mathbf{A}_1 \cdot \mathbf{A}_2 ) \cos(4 \omega t)  -3 \omega^2(\mathbf{B}_1 \cdot \mathbf{A}_2 + \mathbf{A}_1 \cdot \mathbf{B}_2) \sin(4 \omega t) \label{eq-gp}
\end{align}
Using Lemma~\ref{fourierTrick}, this gives the relations 
\begin{align}
    \label{gpr1}
    &\omega^2(|\mathbf{B}_1|^2 + 9|\mathbf{B}_2|^2 + |\mathbf{A}_1|^2 + 9|\mathbf{A}_2|^2) = 2C^2\\
    \label{gpr2}
    &|\mathbf{B}_1|^2 -|\mathbf{A}_1|^2 + 6(\mathbf{B}_1 \cdot \mathbf{B}_2 + \mathbf{A}_1 \cdot \mathbf{A}_2) = 0\\
    \label{gpr3}
    &|\mathbf{B}_2| =|\mathbf{A}_2|\\
    \label{gpr4}
    &\mathbf{A}_1 \cdot \mathbf{B}_1 +3(\mathbf{B}_1 \cdot \mathbf{A}_2 - \mathbf{B}_2 \cdot \mathbf{A}_1 ) = 0\\
    \label{gpr5}
    &\mathbf{A}_2 \cdot \mathbf{B}_2  = 0\\
    \label{gpr6}
    &\mathbf{B}_1 \cdot \mathbf{B}_2 = \mathbf{A}_1 \cdot \mathbf{A}_2\\
    \label{gpr7}
    &\mathbf{B}_1 \cdot \mathbf{A}_2 =- \mathbf{A}_1 \cdot \mathbf{B}_2
\end{align}
Comparing Equations (\ref{gr1} - \ref{gr7}) and Equations (\ref{gpr1} - \ref{gpr7}), we see that we have obtained three new relations, which are Equations~\eqref{gpr1},~\eqref{gpr2}~and~\eqref{gpr4}. Combining Equations \eqref{gr2} with \eqref{gpr2} we see that 
    \[\abs{\ba_1} = \abs{\bb_1} \text{ and } \ba_1 \cdot \ba_2 = -\bb_1 \cdot \bb_2.\]
Similarly, Equations \eqref{gr4} and \eqref{gpr4} imply that
    \[\ba_1 \cdot \bb_1 = 0 \text{ and } \ba_1 \cdot \bb_2 = \bb_1 \cdot \ba_2.\]
Combining these last two relations with Equations (\ref{gr1} - \ref{gr7}), we get that the coefficient vectors $\ba_1, \bb_1, \ba_2, \bb_2$ are mutually orthogonal, $\abs{\ba_1}= \abs{\bb_1}$, and $\abs{\ba_2}= \abs{\bb_2}$. Conclusions (1), (2) and (3) follow again.

Now we will prove part (b) of the proposition.  We set $\omega_1 = 0$ and $\omega_2 = \omega$ in Equation~\eqref{eq-gdotg}.  Then in Equation~\eqref{eq-gdotg} we have the following three distinct frequencies:
    \[0 = 2 \omega_1, \quad 2 \omega_2 = 2 \omega, \quad \omega_2 + \omega_1 = \omega_2 - \omega_1 = \omega\]
Therefore, by Lemma~\ref{fourierTrick} we have
\begin{align}
    &2\abs{\ba_1}^2 + \abs{\ba_2}^2 + \abs{\bb_2}^2 = 2\label{eq-degen1}\\
    &\abs{\ba_2} = \abs{\bb_2}\label{eq-degen2}\\
    &\ba_1 \cdot \ba_2  = 0\nonumber,\ \ba_1 \cdot \bb_2 = 0\nonumber, \text{ and }\ba_2 \cdot \bb_2 = 0\nonumber
\end{align}
Equations~\eqref{eq-degen1}~and~\eqref{eq-degen2} are precisely conclusions 1 and 2 of part (b) of the proposition. Additionally, combining Equations~\eqref{eq-degen1},~and~\eqref{eq-degen2} we have conclusion 3 of part (b) of the proposition.

\end{proof}
 In connection with the proof of part (a) of the above proposition, we note that if $\omega_2 = 3 \omega_1$, one can construct Lissajous curves in $\sx^3$ (of non-constant speed) for which the coefficient vectors are not orthogonal.

\section{Proof of Theorem~\ref{thm-main}}\label{sec-proofthm1}
\subsection{Part 1} \label{sec-proofthm1part1}
Adjoining the relation $\bs{\gamma}'(t) = \mathbf{T}(t)$ to the Frenet-Serret equations (Equation \eqref{eq-frenets3}) we obtain the system of equations
\begin{alignat}{6}
     \bs{\gamma}'(t) &= && &&\phantom{-}\phantom{\kappa}\mathbf{T}(t) && && \nonumber \\
     \mathbf{T}'(t)  &= &&-\bs{\gamma}(t) && &&-\kappa \mathbf{N}(t) && \nonumber \\
     \mathbf{N}'(t)  &= && &&-\kappa \mathbf{T}(t) && &&+\tau \mathbf{B}(t) \label{eq-augfs} \\
     \mathbf{B}'(t)  &= && && &&-\tau \mathbf{N}(t) && \nonumber
\end{alignat}
where $\kappa$ and $\tau$ are the given constants.  We now rewrite these equations in matrix form.  Note that the four vectors $\bs{\gamma}(t), \mathbf{T}(t),\mathbf{N}(t), \mathbf{B}(t)$ form an orthonormal basis of $\rl^4$. Let $\mathbf{X}(t)$ denote the $4\times 4$ matrix whose rows are  these four vectors.  Then for each $t$, the matrix $\mathbf{X}(t)$ is orthogonal. When the curvature $\kappa$ and the torsion $\tau$ are constants, the augmented Frenet-Serret equations given by the set of Equations in \eqref{eq-augfs} in the sphere Equation may be written in matrix form as
\begin{align}\label{eq-frenet-constant}
    \mathbf{X}'(t)= \mathbf{C}\cdot\mathbf{X}(t)
\end{align}
where $\mathbf{C}$ denotes the skew-symmetric matrix
\begin{align}\label{eq-C}
     \mathbf{C} = \begin{bmatrix}
        0 & 1 & 0 & 0 \\
        -1 & 0 & \kappa & 0 \\
        0 & -\kappa & 0 & \tau \\
        0 & 0 & -\tau & 0
    \end{bmatrix}
\end{align}

From the theory of ordinary differential equations we know that the solution to the constant coefficient system presented in Equation \eqref{eq-frenet-constant} exists for all $t$ and is given by

\begin{align} \label{eq-ODEsol}
    \mathbf{X}(t) = e^{t\mathbf{C}}\cdot \mathbf{X}(0).
\end{align}
This proves part 1 of the theorem.

\subsection{Part 2} \label{sec-proofthm1part2}
In order to calculate the matrix exponential $e^{t\mathbf{C}}$, we first recognize that since $\mathbf{C}$ is skew-symetric, it can be diagonalized and thus written as
\begin{align*}
    \mathbf{C} =  \mathbf{P} \left(i \mathbf{D}\right) \mathbf{P}^{-1}
\end{align*}
where $\mathbf{D}$ is a diagonal matrix with real entries of the form
\begin{align} \label{eq-D}
    \mathbf{D} ={\rm  diag} ( \omega_1, - \omega_1, \omega_2, -\omega_2 ),
\end{align}
where $\omega_1,\omega_2\geq 0$.  
This is because the eigenvalues of the real skew-symmetric matrix $\mathbf{C}$ are purely imaginary and occur in complex conjugate pairs.
Therefore,
\begin{align} \label{similar}
    e^{t\mathbf{C}} = \mathbf{P} e^{i t \mathbf{D}} \mathbf{P}^{-1}.
\end{align}
From Equation~\eqref{eq-D} it follows that
$$e^{it\mathbf{D}} = {\rm diag} (e^{i \omega_1 t}, e^{-i \omega_1 t}, e^{i \omega_2 t}, e^{-i \omega_2 t}).$$
Since $\bs{\gamma}(t)$ is the first row of the matrix $\mathbf{X}(t) = \mathbf{P} e^{i t \mathbf{D}} \mathbf{P}^{-1} \cdot \mathbf{X}(0)$ it follows that
$$\bs{\gamma}(t) = \cos{(\omega_1 t)} \mathbf{A}_1 + \sin{(\omega_1 t)} \mathbf{B}_1 + \cos{(\omega_2 t)} \mathbf{A}_2 + \sin{(\omega_2 t)} \mathbf{B}_2$$
where the coefficient vectors $\ba_1, \bb_1, \ba_2, \text{ and } \bb_2$ are constant vectors in $\rl^4$.  This proves part 2 of the theorem.

\subsection{Part 3} \label{sec-proofthm1part3}
The diagonal entries of $i\mathbf{D}$, where $\mathbf{D}$ is as in Equation~\eqref{eq-D}, are the eigenvalues of the matrix $\mathbf{C}$ of Equation~\eqref{eq-C}.  We find them by solving the characteristic equation
\begin{align}
    \det\left(\mathbf{C} - x\mathbf{I}\right) = x^4 + (\kappa^2 + \tau^2 + 1) x^2 + \tau^2 = x^4 + \chi^2 x^2 + \tau^2 = 0 \label{eq-discriminant}
\end{align}
with $\chi$ as in Equation~\eqref{eq-chidef}.
The solutions of the  characteristic equation are 
$$x = \pm i\omega_1 \text{ or } x = \pm i \omega_2,$$
where $\omega_1,\omega_2$ are as in Equations~\eqref{eq-omega1}~and~\eqref{eq-omega2}.  This proves part 3 of the theorem.

\subsection{Part 4} \label{sec-proofthm1part4}
Since $\kappa > 0$, we have
\begin{align}
    \chi^4 - 4\tau^2 &= (\kappa^2 + \tau^2 + 1)^2 - 4\tau^2\nonumber \\
                     &= \kappa^4 + (\tau^2 -1)^2 + 2 \kappa^2 \tau^2 + 2\kappa^2\nonumber \\
                     &> (\tau^2 -1)^2 \label{eq-discriminant2}.
\end{align}
Therefore, from Equations~\eqref{eq-omega1}~and~\eqref{eq-omega2} we see that $\omega_2 > \omega_1$. by definition of $\chi$ in \eqref{eq-chidef} we have, 
\begin{align}\label{eq-chi}
    \chi^2 = \kappa^2 + \tau^2 + 1 > 1 + \tau^2.
\end{align}
Combining Equations~\eqref{eq-discriminant2}~and~\eqref{eq-chi} we have 
\begin{align}
    \chi^2 + \sqrt{\chi^4 - 4\tau^2} &> (1 + \tau^2) + \sqrt{(\tau^2 - 1)^2} \label{eq-chi2}\\
    &= 1 + \tau^2 + |\tau^2 - 1| \nonumber\\
    &= \begin{cases}\label{eq-eq1234}
        2 \tau^2, & \tau \geq 1\\
        2, &\tau < 1
    \end{cases}
\end{align}
Therefore, $\chi^2 + \sqrt{\chi^4 - 4 \tau^2} > 2$, and we have
\begin{align} \label{w2geq1}
    \omega_2^2 = \frac{\chi^2 + \sqrt{\chi^4 - 4 \tau^2}}{2} > 1.
\end{align}
Then, by making use of Equation \eqref{eq-eq1234},
\begin{align}
    \omega_1^2 =\frac{\chi^2 - \sqrt{\chi^4 - 4 \tau^2}}{2} = \frac{2 \tau^2}{\chi^2 + \sqrt{\chi^2 - 4 \tau^2}} < \begin{cases}
        1, &\tau \geq 1\\
        \tau^2, &\tau < 1
    \end{cases}
\end{align}
Thus, 
\begin{align}
    \omega_1 < 1 \label{w1leq1}
\end{align}
This proves part 4 of the theorem.

\subsection{Part 5} \label{sec-proofthm1part5}
Note that $\abs{\bs{\gamma}(t)} = 1$ for all $t$ and $\abs{\bs{\gamma}'(t)} = 1$ for all $t$ as well, since $\bs{\gamma}$ lies in $\sx^3$ and has unit speed. Since $\tau \neq 0$ by Equation~\eqref{eq-omega1}, we know that $\omega_1 > 0$.  Therefore, by part (a) of Proposition~\ref{prop-lissajous},
\begin{enumerate}[leftmargin=*] \itemsep5pt
    \item The coefficient vectors $\ba_1, \bb_1, \ba_2, \bb_2$ are mutually orthogonal, which is one of the conclusions of part 5 of Theorem \ref{thm-main},
    
    \item $\abs{\ba_1}= \abs{\bb_1} \text{ and } \abs{\ba_2}= \abs{\bb_2}$ which is part of the content of Equations~\eqref{eq-a1b1}~and~\eqref{eq-a2b2}. We will however need to work further to obtain the remaining content of these equations and to therefore complete the proof of part 5,
    
    \item We have that
        \begin{align} \label{eq-A1A21}
            \abs{\ba_1}^2 + \abs{\ba_2}^2 = 1.
        \end{align} 
\end{enumerate}
Now, let $\bs{\alpha}(t) = \bs{\gamma}'(t)$. Then, $\abs{\bs{\alpha}(t)} = 1$ (since $\bs{\gamma}(t)$ is parameterized by arc length) and differentiating Equation~\eqref{eq-lissajous}, we see that $\bs{\alpha}(t)$ may be represented as
\begin{align*}
    \bs{\alpha}(t) &= \omega_1  \cos(\omega_1 t)\mathbf{B}_1  - \omega_1 \sin(\omega_1 t)\mathbf{A}_1  + \omega_2 \cos(\omega_2 t)\mathbf{B}_2  - \omega_2 \sin(\omega_2 t)\mathbf{A}_2\\
    & = \cos(\omega_1 t)\mathbf{P}_1 + \sin(\omega_1 t) \mathbf{Q}_1 + \cos(\omega_2 t) \mathbf{P}_2 + \sin(\omega_2 t) \mathbf{Q}_2,
\end{align*}
where $\mathbf{P}_1 = \omega_1 \mathbf{B}_1$, $\mathbf{Q}_1 =- \omega_1 \mathbf{A}_1$, $\mathbf{P}_2 = \omega_2 \mathbf{B}_2$, and $\mathbf{Q}_2 = -\omega_2 \mathbf{A}_2$. This shows that $\bs{\alpha}$ is a Lissajous curve in $\sx^3$.

Now, we claim that $\abs{\bs{\alpha}'(t)}$ is constant independently of $t$. Recall, that $$\bs{\alpha}'(t) = \bs{\gamma}''(t) = \mathbf{T}'(t)$$
where $\mathbf{T}$ is the tangent vector field in the Frenet Frame $(\mathbf{T,N,B})$. Therefore, by the first equation in \eqref{eq-frenets3}, we have 
$$\bs{\gamma}''(t) = - \kappa \mathbf{N}(t) - \bs{\gamma}(t),$$
which yields, 
\begin{align}
    \bs{\alpha}'(t) \cdot \bs{\alpha}'(t) &= \kappa^2 + 1 + 2\kappa(\mathbf{N}(t)\cdot \bs{\gamma}(t))\nonumber \\
                                        & = \kappa^2 + 1
\end{align}
where the term $\mathbf{N}(t)\cdot \bs{\gamma}(t) = 0$ since $\mathbf{N}(t) \in T_{\bs{\gamma}(t)}\sx^3$. We may now apply conclusion 3 of part (a) of Proposition \ref{prop-lissajous} to obtain that $\abs{\mathbf{P}_1}^2 + \abs{\mathbf{P}_2}^2 = 1$, which is equivalent to the statement that 
\begin{align} \label{eq-w1a1}
    \omega_1^2 \abs{\ba_1}^2 + \omega_2^2 \abs{\ba_2}^2 = 1.
\end{align}
Combining Equations \eqref{eq-A1A21} and \eqref{eq-w1a1}, we get
$$\omega_1^2\abs{\ba_1}^2 + \omega_2^2(1 - \abs{\ba_1}^2) = 1$$
and
$$\omega_1^2(1- \abs{\ba_2}^2) + \omega_2^2 \abs{\ba_2}^2 = 1.$$
Solving these equations for $\abs{\ba_1}^2$ and $\abs{\ba_2}^2$, we obtain Equations \eqref{eq-a1b1} and \eqref{eq-a2b2}.

\subsection{Part 6}
If $\tau = 0$, by Equation~\eqref{eq-omega1}, we have $\omega_1 = 0$. We set $\omega_2 = \omega = \sqrt{\kappa^2 + 1}$ by Equation~\eqref{eq-omega2} and then by parts 1,2, and 3 of this theorem, proved above, $\bs{\gamma}$ is given by
    \[\bs{\gamma}(t) = \ba_1 + \cos(\omega t) \ba_2 + \sin(\omega t) \bb_2.\]
Furthermore, by part (b) of Proposition~\ref{prop-lissajous} we know that $\abs{\ba_2} = \abs{\bb_2}$, that the coefficient vectors $\ba_1, \ba_2, \text{ and } \bb_2$ are mutually orthogonal. Note that, 
    \[\bs{\gamma}'(t) = -\omega\sin(\omega t) \ba_2 + \omega \cos(\omega t)
    \bb_2.\]
Since $\bs{\gamma}'(t)$ has unit speed, we have 
    \[\bs{\gamma}'(t) \cdot \bs{\gamma}'(t) = 1 = \omega^2 \abs{\ba_2}^2\sin^2(\omega t) + \omega^2 \abs{\bb_2}^2 \cos^2(\omega t) = \omega^2 \abs{\ba_2}^2,\]
which implies, 
\begin{align}\label{eq-degenmag2}
    \abs{\ba_2} = \frac{1}{\omega}.
\end{align}
Combining Equation~\eqref{eq-degenmag2} with conclusion 3 of part (b) of Proposition~\ref{prop-lissajous}, we get that
\begin{align}\label{eq-degenmag1}
    \abs{\ba_1} = \sqrt{1 -\frac{1}{\omega^2}}.
\end{align}

\section{Proof of Theorem ~\ref{thm-existence}}\label{sec-proofthm2}
\subsection{Part 1} \label{sec-proofthm2part1}
Suppose that we have two helices $\bs{\alpha}$ and $\bs{\beta}$ in $\sx^3$ with the same curvature $\kappa \geq 0$ and torsion $\tau \geq 0$.  Then, by part 3 of Theorem~\ref{thm-main}, the fundamental angular frequencies $\omega_1 \text{ and } \omega_2$ of these two curves are the same. Thus, the curves are represented as
\begin{align*}
    \bs{\alpha}(t) &= \cos(\omega_1 t) \ba_1 + \sin(\omega_1 t) \bb_1 + \cos(\omega_2 t) \ba_2 + \sin(\omega_2 t) \bb_2\\
    \bs{\beta}(t)  &= \cos(\omega_1 t) \mathbf{C}_1 + \sin(\omega_1 t) \mathbf{D}_1 + \cos(\omega_2 t) \mathbf{C}_2 + \sin(\omega_2 t) \mathbf{D}_2
\end{align*}
If $\tau \neq 0$, by part 5 of Theorem~\ref{thm-main}, we also know that 
\begin{align*}
    \abs{\ba_1} &= \abs{\bb_1} = \abs{\mathbf{C}_1} = \abs{\mathbf{D}_1} = \sqrt{\cfrac{1 - \omega_2^2}{\omega_1^2 - \omega_2^2}},\\
    \abs{\ba_2} &= \abs{\bb_2} = \abs{\mathbf{C}_2} = \abs{\mathbf{D}_2} = \sqrt{\cfrac{1 - \omega_1^2}{\omega_2^2 - \omega_1^2}},
\end{align*}
and that the sets of vectors $\{\ba_1, \bb_1, \ba_2, \bb_2\}$ are mutually orthogonal and $\{\mathbf{C}_1, \mathbf{D}_1, \mathbf{C}_2, \mathbf{D}_2\}$ are also mutually orthogonal. Therefore, there exists an orthogonal map, $G: \rl^4 \to \rl^4$ in $O(4)$ such that $G(\ba_1) = \mathbf{C}_1$, $G(\bb_1) = \mathbf{D}_1$, $G(\ba_2) = \mathbf{C}_2$, and $G(\bb_2) = \mathbf{D}_2$. Then $f = G|_{\sx^3}$ is an isometry of $\sx^3$ and it is clear that $\bs{\beta} = f \circ \bs{\alpha}$.

If $\tau = 0$, then by part 6 of Theorem~\ref{thm-main}, we know that $\bs{\alpha}$ and $\bs{\beta}$ take the form
\begin{align*}
    \bs{\alpha}(t) &= \ba_1  + \cos(\omega t)\ba_2  + \sin(\omega t)\bb_2 \\
    \bs{\beta}(t)  &= \mathbf{C}_1 + \cos(\omega t)\mathbf{C}_2  + \sin(\omega t)\mathbf{D}_2 
\end{align*}
where $\omega_1 = 0$ and $\omega = \omega_2 = \sqrt{\kappa^2 + 1}$ by Equations~\eqref{eq-omega1}~and~\eqref{eq-omega2}. Furthermore, by part 6 of Theorem~\ref{thm-main}, we know that
\begin{align*}
    \abs{\ba_2} = \abs{\bb_2} = \abs{\mathbf{C}_2} = \abs{\mathbf{D}_2} = \frac{1}{\omega}, \quad \abs{\ba_1} = \abs{\mathbf{C}_1} = \sqrt{1 -\cfrac{1}{\omega^2}},
\end{align*}
and that the sets of vectors $\{\ba_1, \ba_2, \bb_2\}$ are mutually orthogonal and $\{\mathbf{C}_1, \mathbf{C}_2, \mathbf{D}_2\}$ are also mutually orthogonal. Therefore, there again exists an orthogonal map, $G : \rl^4 \to \rl^4$ in $O(4)$ such that $G(\ba_1) = \mathbf{C}_1$, $G(\ba_2) = \mathbf{C_2}$, $G(\bb_2) = \mathbf{D}_2$. Then $f = G|_{\sx^3}$ is again an isometry of $\sx^3$ and it is clear that $\bs{\beta} = f \circ \bs{\alpha}$.

\subsection{Part 2} \label{sec-proofthm2part2}
 Let $\bs{\gamma}$ be a helix in $\sx^3$ which can be written, thanks to Theorem \ref{thm-main}, in the form of Equation \eqref{eq-lissajous}:
    \[\bs{\gamma}(t) =\cos(\omega_1 t) \mathbf{A}_1  + \sin(\omega_1 t)\mathbf{B}_1  + \cos(\omega_2 t)\mathbf{A}_2 +\sin(\omega_2 t)\mathbf{B}_2\]
Now suppose that $\bs{\gamma}$ is periodic with period $T$. Then, for each $t \in \rl$, we have
    \[\bs{\gamma}(t) = \bs{\gamma}(t + T).\]
First, let us assume that $\tau \neq 0$ and consequently, because of Equation~\eqref{eq-omega1}, $\omega_1 \neq 0$. Then, comparing the coefficients of $\ba_1 \text{ and } \bb_1$, we obtain,
\begin{align}\label{eq-t1}
    \cos(\omega_1 (t + T) ) = \cos(\omega_1 t)\quad \text{ and }\quad \sin(\omega_1 (t + T) ) = \sin( \omega_1 t)
\end{align}
for each $t \in \mathbb{R}$. This shows that there exists a non-zero $m \in \mathbb{Z}$ such that,
    \[T = \frac{2\pi m}{\omega_1}.\]
Similarly, we compare the coefficients of $\ba_2 \text{ and } \bb_2$, to get
\begin{align}\label{eq-t2}
    \cos(\omega_2 (t + T) ) = \cos( \omega_2 t)\quad \text{ and }\quad \sin(\omega_2 (t + T) ) = \sin( \omega_2 t)
\end{align}
for each $t \in \rl$. This shows that there exists a non-zero $n \in \mathbb{Z}$ such that, 
    \[T = \frac{2\pi n}{\omega_2}.\]
It follows that
    \[\frac{\omega_1}{\omega_2} = \frac{m}{n} \in \mathbb{Q}\]
Now we prove the converse. Suppose that $\cfrac{\omega_1}{\omega_2} = \cfrac{m}{n} \in \mathbb{Q}$. Then, let
    \[T = \frac{2\pi m}{\omega_1} = \frac{2\pi n}{\omega_2}.\]
Then, Equations \eqref{eq-t1} and \eqref{eq-t2} hold. Therefore, 
    \[\bs{\gamma}(t + T) = \bs{\gamma}(t)\]
Which proves part 2 of Theorem~\ref{thm-existence} in the case where $\tau \neq 0$. If, on the other hand, $\tau = 0$ and consequently $\omega_1 = 0$, then $\bs{\gamma}$ is given by,
$$\bs{\gamma}(t) = \ba_1 + \cos(\omega_2 t) \ba_2 + \sin(\omega_2 t) \bb_2.$$
Which is always periodic with a period of 
$$T = \frac{2 \pi}{\omega_2}.$$
This proves part 2 of the theorem.

\subsection{Part 3}
Let $\kappa, \tau > 0$ and let $\mathbf{a}_1, \mathbf{b}_1, \mathbf{a}_2, \text{ and } \mathbf{b}_2$ be the orthonormal basis of $\rl^4$ consisting of the unit vectors along the coefficient vectors $\ba_1, \bb_1, \ba_2, \text{ and } \bb_2$ of $\bs{\gamma}$ as given in Equation~\eqref{eq-lissajous}. We denote the coordinates of a point $\mathbf{x} \in \rl^4$ by
$$\mathbf{x} = x_1 \mathbf{a}_1 + x_2 \mathbf{b}_1 + x_3 \mathbf{a}_2 + x_4 \mathbf{b}_2.$$
By parts 2 and 5 of Theorem~\ref{thm-main}, we have that $\bs{\gamma}$ is represented in these coordinates by $x_1 = \abs{\ba_1} \cos(\omega_1 t), x_2 = \abs{\ba_1} \sin(\omega_1 t), x_3 = \abs{\ba_2} \cos(\omega_2 t), \text{ and } x_4 = \abs{\ba_2} \sin(\omega_2 t)$. Consider the torus in $\rl^4$ given by
\begin{align}
    \mathbb{T}^2_{\bs{\gamma}} = \left\{\mathbf{x} \in \rl^4 : x_1^2 + x_2^2 = \abs{\ba_1}^2, x_3^2 + x_4^2 = \abs{\ba_2}^2 \right\}.
\end{align}
Clearly $\bs{\gamma}$ lies on $\mathbb{T}^2_{\bs{\gamma}}$. It is clear that $\mathbb{T}^2_{\bs{\gamma}}$ is a flat Clifford torus in $\rl^4$, and  is contained in $\sx^3$, since if $\mathbf{x}=(x_1,x_2,x_3,x_4)\in \mathbb{T}^2_{\bs{\gamma}}$, then
\begin{align*}
    \abs{\mathbf{x}}^2=x_1^2 + x_2^2+ x_3^2 + x_4^2 &= \abs{\ba_1}^2+\abs{\ba_2}^2\\
        &= \frac{1 - \omega_2^2}{\omega_1^2 - \omega_2^2} + \frac{1 - \omega_1^2}{\omega_2^2 - \omega_1^2}\\
        &=1,
\end{align*}
where the last equality follows by Equations~\eqref{eq-a1b1}~and~\eqref{eq-a2b2}.

\subsection{Part 4}
Note that the helix~$\bs{\gamma}$ is a solution of the differential equations on the torus~$\mathbb{T}^2_{\bs{\gamma}}$
$$\frac{d \theta_1}{dt} = \omega_1 \text{ and } \frac{d \theta_2}{d t} = \omega_2$$
where $\theta_1$ (resp. $\theta_2$) is an angular coordinate on the circle $x_1^2 + x_2^2 = \abs{\ba_1}^2$ (resp. $x_3^2 + x_4^2 = \abs{\ba_2}^2$). If $\cfrac{\omega_1}{\omega_2}\notin \mathbb{Q}$, then a classical result in the theory of dynamical systems \cite[Proposition 4.2.8, p. 113]{katok} shows that the image of $\bs{\gamma}$ is dense in $\mathbb{T}^2_{\bs{\gamma}}$. Therefore, part 3 of the theorem is proven.

\end{document}